\theoremstyle{plain}
\newtheorem{thm}{Donotwrite}[section]
\newtheorem{theorem}[thm]{Theorem}
\newtheorem{lemma}[thm]{Lemma}
\theoremstyle{definition}
\newtheorem{example}[thm]{Example}
\newtheorem{remark}[thm]{Remark}
\numberwithin{equation}{section}
\newcommand{\nc}{\newcommand}
\nc{\op}{\oplus} \nc{\pv}{P^{\vee}}
\newcommand{\Z}{\mathbb{Z}}
\newcommand{\C}{\mathbb{C}}
\newcommand{\La}{\Lambda}
\newcommand{\la}{\lambda}
\newcommand{\al}{\alpha}
\newcommand{\g}{\mathfrak{g}}
\nc{\B}{\mathbf{B}} \nc{\V}{\mathbf{V}} 
\nc{\nbinom}[2]{\genfrac{}{}{0pt}{1}{{#1}}{{#2}}}
\nc{\qbinom}[2]{\left[\genfrac{}{}{0pt}{1}{{#1}}{{#2}}\right]}
\nc{\ft}{\tilde{f}} 
\nc{\et}{\tilde{e}} 
\nc{\Y}{\mathbf{Y}}
\nc{\T}{\mathbf{T}}
\nc{\R}{\mathbf{R}}
\nc{\D}{\mathbf{D}}
\nc{\ra}{\rightarrow} 
\nc{\vep}{\varepsilon} 
\nc{\vp}{\varphi}
\nc{\h}{\mathfrak{h}} 
\nc{\oP}{\overline{P}}
\nc{\Fit}{\tilde{F}_i} 
\nc{\Eit}{\tilde{E}_i}
\nc{\fit}{\tilde{f}_i} 
\nc{\eit}{\tilde{e}_i}
\nc{\mf}{\mathfrak}
\nc{\ds}{\displaystyle}
\nc{\oa}{a'}
\nc{\ob}{b'}
\nc{\mc}{\mathcal}
\nc{\bsx}{\boldsymbol{x}}
\nc{\imin}{i_{min}}
\nc{\imax}{i_{max}}
\nc{\dmin}{d_{min}}
\nc{\dmax}{d_{max}}
\begin{document}

\title{Multiplicities of maximal weights of the $\widehat{s\ell}(n) $-module $V(k\Lambda_0)$}
\author{Rebecca L. Jayne}
\address{Hampden-Sydney College, Hampden-Sydney, VA 23943}
\email{rjayne@hsc.edu} 

\author{Kailash C. Misra}
\address{Department of Mathematics, North Carolina State University,  Raleigh,  NC 27695-8205}
\email{misra@ncsu.edu}

\subjclass[2010]{Primary 17B67, 17B37, 17B10; Secondary 05A17, 05E10}
\keywords{affine Lie algebra; integrable representation; crystal base;  Young tableau; avoiding permutation}
\thanks{KCM: partially supported by Simons Foundation grant \#  636482}

\begin{abstract} Consider the affine Lie algebra $\widehat{s\ell}(n)$ with null root $\delta$, weight lattice $P$ and set of dominant weights $P^+$. Let $V(k\Lambda_0), \, k \in \mathbb{Z}_{\geq 1}$ denote the integrable highest weight $\widehat{s\ell}(n)$-module with level $k \geq 1$ highest weight $k\Lambda_0$. Let $wt(V)$ denote the set of weights of $V(k\Lambda_0)$. A weight $\mu \in wt(V)$ is a maximal weight if 
$\mu + \delta  \not\in wt(V)$. Let $max^+(k\La_0)= max(k\Lambda_0)\cap P^+$  denote the set of maximal dominant weights which is known to be a finite set. In 2014, the authors gave the complete description of the set $max^+(k\Lambda_0)$  \cite{JM1} . In subsequent papers \cite{JM2, JM3}, the multiplicities of certain subsets of $max^+(k\Lambda_0)$ were given in terms of some pattern-avoiding permutations using the  associated crystal base theory.  In this paper the multiplicity of all the maximal dominant weights of the $\widehat{s\ell}(n) $-module $V(k\Lambda_0)$ are given generalizing the results in \cite{JM2, JM3}.
\end{abstract}

\maketitle

\section{Introduction} 
Affine Lie algebras form an important class of infinite dimensional Kac-Moody Lie algebras (cf: \cite{Kac}) generalizing finite dimensional semisimple Lie algebras. Unlike finite dimensional semisimple Lie algebras, an affine Lie algebra has a one dimensional center and not all its roots are Weyl group conjugate to some simple root. The set of roots which are Weyl group conjugate to simple roots are called real roots; hence their multiplicities are one. The roots which are not real are called imaginary roots and these roots could have multiplicity greater than one. The affine Lie algebras are classified and the multiplicities of each of their roots are known \cite{Kac}. Let $\g$ be an affine Lie algebra over the field of complex numbers $\C$ with simple roots $\{\al_i \mid 0 \leq i \leq n-1\}$, simple coroots $\{h_i \mid 0 \leq i \leq n-1\}$, null root $\delta$, canonical central element $c$, and fundamental weights $\{\Lambda_i \mid 0 \leq i \leq n-1\}$. Then the subalgebra $\h = {\rm span}\{h_0, \cdots , h_{n-1}, d\}$ is called the Cartan subalgebra where $d$ is a suitable degree derivation. The set $P = \Z\Lambda_0 \oplus \cdots \oplus \Z\Lambda_{n-1} \oplus \Z\delta$ is the weight lattice and $P^+ = \{\lambda \in P \mid \lambda(h_i) \in \Z_{\geq 0}, 0\leq i \leq n-1\}$ is the set of dominant weights. For each dominant weight $\lambda \in P^+$ there is a unique (up to isomorphism) irreducible integrable $\g$-module $V(\lambda)$ with highest weight $\lambda$. Any weight $\mu$ of $V(\lambda)$ is of the form $\mu = \lambda -\sum_{i =0}^{n-1}m_i\al_i$ with each $m_i \in \Z_{\geq 0}$. Determining the multiplicities of these weights is still an open problem. A weight $\la \in P^+$ is of level $k$ if $\la (c) = k$. A weight $\mu$ of $V(\lambda)$ is a maximal weight if $\mu + \delta$ is not a weight. Let $max(\lambda)$ denote the set of maximal weights of $V(\lambda)$. Then any weight $\mu$ of $V(\lambda)$ is of the form $\mu = \nu - l\delta$ for some $\nu \in max(\lambda)$, and $l \in \Z_{\geq 0}$. Since weight multiplicities are invariant under the action of the Weyl group of $\g$, it is sufficient to focus on the set of maximal dominant weights $max^+(\La) = max(\La)\cap P^+$. It is known \cite[Proposition 12.6]{Kac} that the set $max^+(\La)$ is finite. However, explicit description of these weights is a nontrivial task. In \cite{JM1}, the authors gave complete descriptions of maximal dominant weights of the $\widehat{s\ell}(n)$-module $V(\lambda)$ for the dominant weights $\lambda = (k-1)\Lambda_0 + \Lambda_s$, $0\leq s \leq n-1$ of level $k \in \Z_{\geq 2}$. More recently, Y-H Kim, S-J Oh and Y-T Oh \cite{KOO} have determined closed form and recursive formulae for the cardinality of the set of maximal dominant weights $max^+(\La)$  for any affine Lie algebra $\g$ and any $\Lambda \in P^+$ using the cyclic sieving phenomenon \cite{RSW}. In particular, they define an equivalence relation $\Lambda \sim \Lambda'$ for $\Lambda , \Lambda' \in P^+$ if $|max^+(\La)| = |max^+(\La ')|$ and showed that in the case of the affine Lie algebra $\widehat{s\ell}(n)$ the set of weights $\{ (k-1)\Lambda_0 + \Lambda_s$, $0\leq s \leq n-1\}$ form the representatives of distinct equivalence classes of maximal dominant weights of level $k$.

In general finding the multiplicities of the weights for the integrable representations of affine Lie algebras is an important and nontrivial problem.
In \cite{Tsu}, Tsuchioka showed that the multiplicities of the maximal dominant weights for the  $\widehat{s\ell}(n)$-module $V(2\La_0)$ are related to the Catalan numbers which are in bijection with the $321$-avoiding permutations \cite{Sta}. Motivated by this work, the authors determined the maximal dominant weights of the $\widehat{s\ell}(n)$-module $V(k\La_0)$ and conjectured that the multiplicities of a certain subclass of these weights are given by $(k+1)k \cdots 21$-avoiding permutations which they proved in \cite{JM2}. An independent proof of this conjecture was also given in \cite{TW}. In \cite{KKO}, J-S Kim, K-H Lee and S-J Oh gave multiplicities of some level $k= 2, 3$ maximal dominant weights of the $\g$-modules $V(k\La), \La \in P^+$ for $\g = B_n^{(1)}, D_n^{(1)}, A_n^{(2)}$ and $D_{n+1}^{(2)}$ using certain combinatorial objects. In \cite{JM3}, the authors gave multiplicities of a larger class of maximal dominant weights of the $\widehat{s\ell}(n)$-modules $V(k\La_0)$ in terms of pattern-avoiding permutations generalizing the results in \cite{JM2}.

In this paper we reformulate the descriptions for the maximal dominant weights of the $\widehat{s\ell}(n)$-modules $V(k\La_0)$ given in \cite{JM1} and give the multiplicities of each of these maximal dominant weights in terms of certain pattern-avoiding permutations generalizing the results in \cite{JM3}. In order to do this we use the explicit descriptions of the crystal base for the $\widehat{s\ell}(n)$-modules $V(k\La_0)$ in terms of extended Young diagrams given in \cite{JMMO}. Using the crystal base properties, first we describe the multiplicities as the number of certain ordered pair of Young tableaux. Then using the RSK correspondence \cite{Sta}, we prove that the number of ordered pairs of these Young tableaux is same as the number of certain pattern-avoiding permutations, generalizing the results in \cite{JM3}. However, the approach in this paper is different from that in \cite{JM3}.

\section{Preliminary} 
Consider the affine Lie algebra $\g = \widehat{s\ell}(n) = s\ell(n, \C) \otimes \C[t, t^{-1}] \oplus \C c \oplus \C d$ (cf: \cite{Kac}) with simple roots $\{\al_i\mid 0\leq i\leq n-1\}$, simple coroots $\{h_i\mid 0\leq i\leq n-1\}$, null root $\delta = \sum_{i=0}^{n-1}\alpha_i$ and canonical central element $c =  \sum_{i=0}^{n-1}h_i$. The set of weights $\{\La_i\mid 0\leq i\leq n-1\}$ are called dominant weights where $\La_j(h_i) = \delta_{i,j}, \La_j(d) = 0$. Then $P = \oplus_{j=0}^{n-1}\Z\La_j\oplus \Z\delta$  is the weight lattice and $P^+ = \{\la \in P \mid \la(h_i) \geq 0, i= 0, 1,\cdots , n-1\}$ is the set of dominant weights. For each dominant weight $\La \in P^+$ there is a unique (up to isomorphism) irreducible integrable $\g$-module $V(\La)$ of level $k = \La(c)$ with highest weight $\La$. Let $wt(\La)$ denote the set of weights of $V(\La)$. Recall that a weight $\la \in wt(\La)$ is a maximal weight if $\la + \delta \not\in wt(\La)$. Let $max(\La)$ denote the set of maximal weights of $V(\La)$. Then for $\la \in wt(\La)$  it is known that $\la = \mu - r\delta$ for some $\mu \in max(\La) , r \in \Z_\geq 0$ \cite[Equation 12.6.1]{Kac}. Knowing the explicit forms of the maximal weights and their multiplicities are important in understanding the structure of $V(\La)$. Since weight multiplicities are invariant under the action of the Weyl group of $\g$, it is sufficient to focus on the set of maximal dominant weights $max(\La)\cap P^+$. It is known \cite[Proposition 12.6]{Kac} that the set $max(\La)\cap P^+$ is finite. For $k \geq 1$, consider the irreducible integrable $\g$-module $V(k\Lambda_0)$ with highest weight $\Lambda = k\Lambda_0$. The set of maximal dominant weights of $V(k\Lambda_0)$ are given in \cite{JM1}. We can reformulate the expressions for these maximal dominant weights using partitions as follows. 

We say a pair of  partitions $(A =\{a_1 \geq a_2 \geq \cdots \geq a_r \geq 1\}$,  $B =\{b_1 \geq b_2 \geq \cdots \geq b_q \geq 1\})$ of $\ell \in \Z_{\geq 1}$ is an admissible pair if $a_1 + b_1 \leq k$ and $r+q \leq n$. Define
$\ds \lambda_{A,B}^{\ell} = \ell \alpha_0 + \sum_{i=1}^{q-1} (\sum_{j=i+1}^q b_j)\alpha_i +  \sum_{i=1}^{r-1}(\sum_{j=i+1}^r a_j)\alpha_{n-i}.$
Observe that since $A$ and $B$ are partitions of $\ell$, $ \frac{\ell}{a_1} \leq r$, $\frac{\ell}{b_1} \leq q$ and so $\lceil \frac{\ell}{b_1} \rceil  +  \lceil \frac{\ell}{a_1} \rceil  \leq n$. Hence it follows from \cite[Lemma 3.1]{JM1} that
$\ds \max\{a_1,b_1\} \leq \ell \leq$ $ \max \left \{\frac{b_1(a_1 n - m)}{a_1+b_1}, \frac{b_1(a_1 n - m)}{a_1+b_1} + m - a_1 \right \}$ with $b_1n \equiv m \pmod{a_1 + b_1}$. 
Then by \cite [Theorem 3.6]{JM1}, the set of maximal dominant weights is
$max(k\Lambda_0) \cap P^+ = \{k\Lambda_0 - \lambda_{A,B}^\ell \mid \ell \geq 1, (A, B) \, {\rm admissible} \, {\rm pairs}\}$.

In this paper, we show that the multiplicity of each of these maximal dominant weights can be given by the number of certain pattern-avoiding permutations of $[\ell] = \{1, 2, 3, \ldots, \ell\}$.  These multiplicity results agree with \cite[Corollary 3.3]{JM2}  in the special case of admissible  partition pairs of $\ell$: $A = (1, 1, 1, \ldots, 1) =  B$ and \cite[Theorem 4.4]{JM3}  in the special case of admissible partition pairs of $\ell$: $A = (a, 1, 1, \ldots, 1) , B = (b, 1, 1, \ldots, 1)$.

\section{Crystal Base of $V(k\Lambda_0)$}
We briefly describe the extended Young diagram realization of the crystal $B(k\Lambda_0)$ for the $\g$-module $V(k\Lambda_0)$ given in \cite{JMMO} which we will use to determine the desired weight multiplicities.   

 An extended Young diagram $Y$ of charge $0$ is a weakly increasing sequence $Y = (y_{i})_{i \geq 0}$ with integer entries such that $y_{i} = 0$ for $i  \gg  0$.  
 %Associated with each sequence $Y = (y_{i})_{i\geq 0}$ is a unique diagram in the $\mathbb{Z}_{\ge 0} \times \mathbb{Z}$ right half lattice.  
 For each element $y_{i}$ of the sequence, we draw a column with depth $-y_{i}$, aligned so the top of the column is on the line $y = 0$ on the right half plane.  We fill in square boxes for all columns from the depth to the line $y=0$ and obtain a diagram with a finite number of boxes.  We color a box with lower right corner at $(a,b)$ by color $j$, where $(a+b) \equiv j \pmod{n}$.   For simplicity, we refer to color $(n-j)$ by $-j$.   Thus an extended Young diagram $Y$ of charge $0$ is uniquely determined by a (finite) colored Young diagram with the top left box of color $0$. The weight of an extended Young diagram of charge $0$ is $ wt(Y) = \Lambda_{0} - \sum_{j=0}^{n-1}c_{j}\alpha_{j},$ where $c_{j}$ is the number of boxes of color $j$ in the diagram.  For $Y = (y_{i})_{i \geq 0}$ we denote $Y[n] = (y_{i} + n)_{i \geq 0}$. For two extended Young diagrams $Y=(y_i)_{i \geq 0}$ and $Y'=(y_i')_{i \geq 0}$ we say  $Y \subseteq Y'$ if $y_i \geq y_i'$ for all $i$ which means $Y$ is contained in $Y'$ as a diagram. Note that this containment is transitive.
 %By definition, $Y[n]$ is a vertical shift of $Y$ by $n$ units.
   
The weight of a $k$-tuple of extended Young diagrams $\Y = (Y_1, Y_2, \ldots , Y_k)$ of charge $0$ is $ wt(\Y) = \sum_{i=1}^{k}wt(Y_{i})$.   Let $\mathcal{Y}(k\Lambda_0)$ denote the set of all $k$-tuples of extended Young diagrams of charge zero. The realization of the crystal $B(k\Lambda_{0})$ for $V(k\Lambda_0)$ is given in the following theorem.

  \begin{theorem}\label{JMMOTh} \cite{JMMO}  Let $V(k\Lambda_0)$ be the irreducible  $\widehat{sl}(n)$-module of highest weight $k\Lambda_0$ and let $B(k\Lambda_0)$ be its crystal.  Then $B(k\Lambda_0) = \{ \Y = (Y_{1}, \ldots , Y_{k}) \in \mathcal{Y}(k\Lambda_0) \mid Y_{1} \supseteq Y_{2} \supseteq \cdots \supseteq Y_{k}  \supseteq Y_1[n], \text{and for each } i \geq 0, \exists \  j \geq 1 \text{ s.t. } (Y_{j+1})_{i} > (Y_{j})_{i+1} \}$.
\end{theorem}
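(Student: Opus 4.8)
The plan is to bootstrap from the level-one case. First I would invoke the $k=1$ instance of the theorem, i.e.\ the Misra--Miwa realization of the basic crystal $B(\Lambda_0)$ by extended Young diagrams of charge $0$, where $\wt(Y)$ is as displayed above and $\tilde f_i$ (resp.\ $\tilde e_i$) adds (resp.\ removes) a box of color $i$ chosen by the signature rule applied to the string of addable and removable $i$-boxes read along the boundary of $Y$; here the inclusion chain degenerates to the single relation $Y_1 \supseteq Y_1[n]$ and the covering condition holds automatically with $j = k = 1$ under the convention $Y_{k+1} := Y_1[n]$. I would take this as known.

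Second, I would realize $B(k\Lambda_0)$ inside $B(\Lambda_0)^{\otimes k}$. Since $V(k\Lambda_0)$ is a direct summand of $V(\Lambda_0)^{\otimes k}$ and a crystal is the disjoint union of its connected components, $B(k\Lambda_0)$ is isomorphic to the connected component of the tensor product of the highest weight elements, which in this model is the tuple $(\emptyset,\ldots,\emptyset)$. I would then transport the crystal operators to $k$-tuples by the tensor product rule: concatenate the $i$-signatures of $Y_k, Y_{k-1}, \ldots, Y_1$, cancel adjacent $(-,+)$ pairs, and let $\tilde f_i$ add an $i$-box to the factor carrying the last surviving $+$ (dually, $\tilde e_i$ removes from the factor carrying the first surviving $-$). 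Additivity $\wt(\Y) = \sum_i \wt(Y_i)$ is then immediate from the level-one weight formula.

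It remains to identify this connected component with the set $\mathcal{C}$ on the right-hand side of the theorem, which I would do in three steps. (i) Verify $(\emptyset,\ldots,\emptyset) \in \mathcal{C}$: the chain reads $\emptyset \supseteq \cdots \supseteq \emptyset \supseteq \emptyset[n]$, which holds because every entry of $\emptyset[n]$ is $n$ and dominates the corresponding entry $0$ of $\emptyset$, and the covering condition is witnessed at $j = k$ since $(\emptyset[n])_i = n > 0 = (\emptyset)_{i+1}$. (ii) Show $\mathcal{C}$ is stable under $\tilde e_i$ and $\tilde f_i$ whenever these are nonzero, i.e.\ that adding or deleting the box prescribed by the signature rule preserves both the inclusion chain $Y_1 \supseteq \cdots \supseteq Y_k \supseteq Y_1[n]$ and all covering inequalities $(Y_{j+1})_i > (Y_j)_{i+1}$. (iii) Show that from any $\Y \in \mathcal{C}$ other than $(\emptyset,\ldots,\emptyset)$ some raising operator $\tilde e_i$ is nonzero, so that iterating raising operators terminates at $(\emptyset,\ldots,\emptyset)$, whence $\mathcal{C}$ coincides with the connected component of $(\emptyset,\ldots,\emptyset)$, namely $B(k\Lambda_0)$. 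Steps (i) and (iii) are bookkeeping; the crux, and the main obstacle, is step (ii), where one must check that the signature-rule cancellation of color-$i$ boxes across the tuple never relocates the affected box so as to violate $Y_j \supseteq Y_{j+1}$ or destroy a local covering inequality --- essentially a case analysis on the relative positions of color-$i$ boxes in consecutive diagrams of the tuple.

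An alternative route, the one actually taken in \cite{JMMO}, bypasses this by going through the theory of perfect crystals: one realizes $B(k\Lambda_0)$ as the set of ground-state paths in the level-$k$ perfect crystal of $A_{n-1}^{(1)}$ and then exhibits a weight-preserving bijection between such paths and $k$-tuples of extended Young diagrams under which the inclusion chain and covering condition become precisely the compatibility (energy-function) constraint on consecutive path letters. Either way, the combinatorial heart is the same: confirming that the crystal operators, read through the signature rule, respect the two defining conditions on $k$-tuples. In the present paper, since the statement is quoted from \cite{JMMO}, one simply cites it; the foregoing is how a self-contained argument would be organized.
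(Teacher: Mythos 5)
The paper offers no proof of this statement: it is imported verbatim from \cite{JMMO}, and you correctly note this at the end of your proposal, so there is no in-paper argument to compare against. The real question is whether your sketch would stand on its own, and as written it would not. The overall strategy --- take the level-one extended-Young-diagram model as known, embed $B(k\Lambda_0)$ as the connected component of $(\emptyset,\ldots,\emptyset)$ in $B(\Lambda_0)^{\otimes k}$ via the tensor-product signature rule, and identify that component with the explicitly described set $\mathcal{C}$ --- is the standard and correct line of attack. But everything that makes the theorem true is concentrated in your steps (ii) and (iii), and both are left as assertions. Step (ii) (stability of $\mathcal{C}$ under $\tilde e_i, \tilde f_i$) is, as you say, a case analysis, but you have not exhibited a single case; in particular you have not shown why the box selected by signature cancellation across the $k$ factors cannot land so as to break $Y_j \supseteq Y_{j+1}$, nor why the condition ``for each $i \geq 0$ there exists $j$ with $(Y_{j+1})_i > (Y_j)_{i+1}$'' is preserved. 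That second condition is the genuinely delicate one, since it is a global constraint on the tuple rather than a local inclusion between consecutive factors.

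Step (iii) is not mere bookkeeping either: to show that every nonempty $\Y \in \mathcal{C}$ admits a nonzero raising operator, you must produce a color $i$ whose removable $i$-box survives the $(-,+)$ cancellation, and this is precisely where the covering condition earns its keep --- it is what separates $\mathcal{C}$ from the strictly larger set of merely nested tuples, which cannot all lie in a single connected component. Without (ii) and (iii) carried out, your argument establishes only that $B(k\Lambda_0)$ is isomorphic to \emph{some} operator-closed set of $k$-tuples, not that it equals $\mathcal{C}$. For the purposes of this paper the citation of \cite{JMMO} is the appropriate ``proof,'' and your recognition of that is fine; but as a self-contained argument the proposal is an outline with its combinatorial core missing.
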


 \begin{remark}   Let $B(k\Lambda_0)_{k\Lambda_0 - \lambda_{A,B}^{\ell}}$ denote the set of $\Y \in B(k\Lambda_0)$ such that wt$(\Y) = k\Lambda_0 - \lambda_{A,B}^\ell$.  Then mult$_{k\Lambda_0}(k\Lambda_0 - \lambda_{A,B}^{\ell}) = |B(k\Lambda_0)_{k\Lambda_0 - \lambda_{A,B}^{\ell}}|$.
\end{remark}

\begin{example}\label{ex_as_crystal} Consider the weight $\mu = 7\Lambda_0 - \lambda_{((4,3,3),(3,2,2,1,1,1) )}^{10} = 7\Lambda_0 - 10 \alpha_0 - 7\alpha_1-5\alpha_2-3\alpha_3-2\alpha_4-\alpha_5-3\alpha_{10}-6\alpha_{11}$ in the $\widehat{sl}(12)$-module $V(7\Lambda_0)$.  
The tuple $Y = (Y_1, Y_2, Y_3, Y_4, Y_5, Y_6, Y_7)$ below is  an element of the crystal base $B(7\Lambda_0)$ of weight $7\Lambda_0 - \lambda_{A,B}^{10}$ with partitions $A = (4,3,3), B = (3,2,2,1,1,1) $. Here $\emptyset$ denotes the empty diagram of charge $0$. Indeed 
 $|B(7\Lambda_0)_{7\Lambda_0 - \lambda_{A,B}^{10}}| = 488 = {\rm mult}_{7\Lambda_0}(\mu)$.
 %(By computer calculation, there are 488 such elements for this particular weight and so the weight multiplicity is 488.) 

$$Y =  \tiny {\left ( \tableau{0 & 1 & 2 & 3 & 4 & 5 \\ -1 & 0 & 1 & 2 & 3 & 4 \\ -2 & -1 & 0 & 1 }, \tableau{0 & 1 & 2 & 3 \\ -1 & 0 & 1 & 2 \\ -2}, \tableau{0 & 1 & 2 \\ -1 & 0  \\ -2 },
\tableau{0 & 1 \\ -1 }, \tableau{0 \\ -1} , \tableau{0}, \emptyset \right )}$$

\end{example}

\vskip 10pt

\section{Elements in  $B(k\Lambda_0)_{\lambda_{A,B}^{\ell}}$ and Partitions}

For $\ell \in \Z_{\geq 1}$, fix admissible pairs of partitions $(A =\{a_1 \geq a_2 \geq \cdots \geq a_r \geq 1\}$,  $B =\{b_1 \geq b_2 \geq \cdots \geq b_q \geq 1\})$ and the dominant maximal weight $\mu = k\Lambda_0 - \lambda_{A,B}^\ell$ where  $\ds \lambda_{A,B}^{\ell} = \ell \alpha_0 + \sum_{i=1}^{q-1} (\sum_{j=i+1}^q b_j)\alpha_i +  \sum_{i=1}^{r-1}(\sum_{j=i+1}^r a_j)\alpha_{n-i}$. In this section, we associate each element of the crystal $B(k\Lambda_0)_{k\Lambda_0 - \lambda_{A,B}^{\ell}}$ of weight $k\Lambda_0 - \lambda_{A,B}^{\ell}$ with a triple $(P,Q,\tau)$, where $P= (P_1, P_2, \cdots , P_{q-1})$ and $Q= (Q_1, Q_2, \cdots , Q_{r-1})$ are sequences of partitions and $\tau$ is a distinguished partition.  For a partition $\tau$ of integer $\ell \in \Z_{\geq 1}$  with at least $j$ parts we denote $S^{\tau}_j$ to be the set of partitions of $\ell-j$ such that exactly $j$ distinct parts of $\tau$ are decreased by 1 and $l(\tau)$ denote the length of $\tau$.  For example,  $S^{(3,2,2,1)}_2 = \{(2,2,1,1), (2,2,2), (3,1,1,1),  (3,2,1)\}$.

Define  $\mathcal{R}_{A,B}^{\ell}$ to be the set of triples $(P, Q, \tau)$ satisfying
\begin{enumerate}
\item $\tau \vdash \ell$, $l(\tau) \leq k$, $\tau_1 \leq \min\{r,q\}$.
\item $P = (P_1, P_2, \cdots , P_{r-1}), Q = (Q_1, Q_2, \cdots , Q_{q-1})$, {\rm where} \, $P_1 \in S_{a_1}^{\tau} , Q_1 \in S_{b_1}^{\tau}$,
$P_i \in S_{a_i}^{P_{i-1}}, 2 \leq i \leq r-1$, $Q_j \in S_{b_j}^{Q_{j-1}}, 2 \leq j \leq q-1$ and $P_{r-1} = (1, 1, \cdots , 1) \vdash a_r, Q_{q-1} =(1, 1, \cdots , 1) \vdash b_q$.
\end{enumerate}
%Note that since $A \vdash \ell , B \vdash \ell$, by definition $S_{a_r}^{P_{r-1}} = \emptyset = S_{b_q}^{Q_{q-1}}$

\bigskip \bigskip

\begin{theorem}\label{card-R}  The multiplicity of $k\Lambda_0 - \lambda_{A,B}^\ell$ in $V(k\Lambda_0)$ is equal to $|\mathcal{R}_{A,B}^\ell|$. 
\end{theorem}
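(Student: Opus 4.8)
The plan is to construct an explicit bijection between $B(k\Lambda_0)_{k\Lambda_0-\lambda_{A,B}^\ell}$ and $\mathcal{R}_{A,B}^\ell$; by the Remark following Theorem \ref{JMMOTh}, the multiplicity $\mathrm{mult}_{k\Lambda_0}(k\Lambda_0-\lambda_{A,B}^\ell)$ equals $|B(k\Lambda_0)_{k\Lambda_0-\lambda_{A,B}^\ell}|$, so exhibiting such a bijection proves the claim. The first step is to analyze what the weight condition $\mathrm{wt}(\Y)=k\Lambda_0-\lambda_{A,B}^\ell$ together with the containment conditions of Theorem \ref{JMMOTh} force on a tuple $\Y=(Y_1,\dots,Y_k)$ of extended Young diagrams of charge $0$. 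Reading off the coefficients of $\lambda_{A,B}^\ell$, the box counts satisfy $c_0=\ell$, while the coefficients on $\alpha_1,\dots,\alpha_{q-1}$ (namely $\sum_{j=i+1}^q b_j$) and on $\alpha_{n-1},\dots,\alpha_{n-r+1}$ (namely $\sum_{j=i+1}^r a_j$) together with the relations $c_0=c_1$ forced by containment $Y_k\supseteq Y_1[n]$ constrain the profile of each $Y_t$ near its upper-left $0$-colored box. I expect that a diagonal-by-diagonal bookkeeping shows each $Y_t$ has a predictable ``staircase'' shape along the main diagonal and two arms of colors $1,2,\dots$ and $-1,-2,\dots$ whose total lengths are governed by $A$ and $B$ respectively, with the free data being how the $\ell$ boxes of color $0$ are distributed among the $k$ diagrams along successive diagonals.

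The second step is to encode that free data. The distinguished partition $\tau\vdash\ell$ with $l(\tau)\le k$ and $\tau_1\le\min\{r,q\}$ records, as its parts, the ``diagonal lengths'' — i.e., how many of the $Y_t$ contain a box on the $m$-th main diagonal, for each $m$ — with $l(\tau)\le k$ because there are only $k$ diagrams and $\tau_1\le\min\{r,q\}$ because the arms have lengths bounded by $r$ and $q$. Then $P_1\in S_{a_1}^\tau$ and $Q_1\in S_{b_1}^\tau$ record how the first ``column'' of the $A$-arm (resp. $B$-arm) — which must remove exactly $a_1$ (resp. $b_1$) boxes, one from each of $a_1$ distinct diagonal-classes, because $a_1+b_1\le k$ guarantees the two removals are compatible — cuts down the shape, and iterating, $P_i\in S_{a_i}^{P_{i-1}}$, $Q_j\in S_{b_j}^{Q_{j-1}}$ track the successive columns of the two arms, terminating at the all-ones partitions $\vdash a_r$ and $\vdash b_q$ since the last column of each arm has height equal to the number of parts. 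The map $\Y\mapsto(P,Q,\tau)$ is then read off these diagonal statistics, and the inverse reconstructs each $Y_t$ uniquely from $(P,Q,\tau)$ by laying down, on each diagonal, exactly the boxes dictated by the partition chain; the final-containment condition $Y_k\supseteq Y_1[n]$ of Theorem \ref{JMMOTh} will correspond exactly to the constraint $\tau_1\le\min\{r,q\}$ together with $a_1+b_1\le k$, and the surjectivity condition ``$\exists j$ with $(Y_{j+1})_i>(Y_j)_i+1$'' built into that theorem is what makes $\tau$ a genuine partition (weakly decreasing parts) rather than an arbitrary sequence.

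The main obstacle I anticipate is the bookkeeping in the first step: verifying carefully that the containment and weight conditions really do pin down the shape of each $Y_t$ up to the diagonal data, and in particular handling the interaction between the two arms near the main diagonal — the inequalities $a_1+b_1\le k$ and $r+q\le n$ from admissibility are precisely what prevent the $A$-arm and $B$-arm from overlapping or from exceeding the available $k$ diagrams, and I will need to show that these are not just necessary but sufficient for the reconstruction to produce a valid element of $B(k\Lambda_0)$. Once the shape analysis is in place, checking that $\Y\mapsto(P,Q,\tau)$ and its reconstruction are mutually inverse is a routine, if notation-heavy, verification, and the membership conditions (1)–(2) defining $\mathcal{R}_{A,B}^\ell$ will match the crystal conditions term by term. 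I would organize the argument as a sequence of lemmas: one describing the forced shape, one defining the forward map, one defining the inverse, and a short final lemma checking the two are inverse bijections.
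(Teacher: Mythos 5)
Your overall strategy---reading off, from each tuple $\Y=(Y_1,\dots,Y_k)$ in the weight space, the counts of boxes of each color and packaging these counts into the chain of partitions $(P,Q,\tau)$---is the same as the paper's, but the specific encoding you describe does not land in $\mathcal{R}_{A,B}^{\ell}$, and the justifications you give for its defining constraints do not hold for your own definition. You set $\tau_m$ equal to the number of diagrams $Y_t$ containing a box on the $m$-th main diagonal; with that convention one gets $\tau_1\le k$ (there are only $k$ diagrams) and $l(\tau)\le\min\{r,q\}$ (the Durfee square of $Y_1$ has side at most $\min\{r,q\}$, since $Y_1$ has at most $r$ rows and $q$ columns), which is the \emph{conjugate} of condition (1) in the definition of $\mathcal{R}_{A,B}^{\ell}$. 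The paper instead indexes by the diagram rather than by the diagonal position: $\tau_j$ is the number of $0$-colored boxes in $Y_j$, and $(P_i)_j$, $(Q_m)_j$ are the numbers of $(-i)$- and $m$-colored boxes in $Y_j$. Only with that indexing do $l(\tau)\le k$ and $\tau_1\le\min\{r,q\}$ come out as you assert, and only then is $P_1\in S^{\tau}_{a_1}$ literally the statement that passing from color $0$ to color $-1$ decreases $a_1$ distinct parts by one (with weak decrease in $j$ supplied by $Y_1\supseteq\cdots\supseteq Y_k$). This is fixable by transposing everything, but as written the forward map is wrong.

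Two further specific claims are incorrect. First, the condition ``for each $i$ there exists $j$ with $(Y_{j+1})_i>(Y_j)_{i+1}$'' is not what makes $\tau$ weakly decreasing; monotonicity comes from the containments $Y_1\supseteq Y_2\supseteq\cdots\supseteq Y_k$. In this weight space that extra condition is essentially automatic: the paper verifies it by noting $(Y_1[n])_i\ge -(r-1)+n\ge q+1>0\ge (Y_k)_{i+1}$, so $j=k$ always works, and it imposes no constraint on $(P,Q,\tau)$. Second, the relation ``$c_0=c_1$ forced by containment $Y_k\supseteq Y_1[n]$'' is false: $c_0=\ell$ while $c_1=\sum_{j=2}^{q}b_j=\ell-b_1$. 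Beyond these errors, the heart of the argument---that the color counts determine each $Y_j$, and that conditions (1)--(2) on $(P,Q,\tau)$ are exactly equivalent to each $Y_j$ being an extended Young diagram with $Y_1\supseteq\cdots\supseteq Y_k$ and the prescribed weight---is only announced as forthcoming bookkeeping (``I expect that\dots'', ``will match term by term''), so what you have is a plan consistent with the paper's proof rather than a proof.
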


\begin{proof} 

It suffices to give a bijection between the sets $B(k\Lambda_0)_{k\Lambda_0 - \lambda_{A,B}^{\ell}}$ and $\mathcal{R}_{A,B}^\ell$. 
 
First, let  $Y = (Y_1, Y_2, \ldots, Y_k) \in B(k\Lambda_0)_{k\Lambda_0 - \lambda_{A,B}^{\ell}}$. We define  $\tau_j$ to be the number of 0-colored boxes, $p_{ij}$ to be the number of $(-i)$-colored boxes and $q_{mj}$ to be the $m$-colored boxes  in $Y_j$. Then by the defining properties of $Y$, we have $\tau =\{\tau_1 \geq \tau_2 \geq \cdots \geq \tau_{l(\tau)}\}\vdash \ell$ ,  and partitions $P_i = \{p_{i1} \geq p_{i2} \geq \cdots \geq p_{i,l(P_i)}\}$, $Q_m = \{q_{m1} \geq q_{m2} \geq \cdots \geq q_{m,l(Q_m)}\}$ with $l(\tau), l(P_i), l(Q_m) \leq k$ for  all $1\leq i \leq r-1$ , and $1\leq m \leq q-1$. Furthermore, $P_1 \in S_{a_1}^{\tau}$, $Q_1 \in S_{b_1}^{\tau}$, $P_i \in S_{a_i}^{P_{i-1}}$, $Q_j \in S_{b_j}^{Q_{j-1}}$, $2 \leq i \leq r-1, 2 \leq j \leq q-1$, and  neither $P_{r-1}$ nor $Q_{q-1}$ have any part larger than one.  We denote $P= (P_1, P_2, \cdots, P_{r-1})$ and $Q= (Q_1, Q_2, \cdots, Q_{q-1})$ as the sequences of partitions. Now, $(P,Q,\tau) \in \mathcal{R}_{A,B}^\ell$.

Conversely, let $(P,Q,\tau) \in \mathcal{R}_{A,B}^\ell$. We form a tuple of extended Young diagrams $Y = (Y_1, Y_2, \ldots, Y_k)$ as follows. For each $j$ from 1 to $k$, we place $\tau_j$ boxes of color 0, $(P_i)_j$ boxes of color $-i$ for $1 \leq i \leq r-1$, and $(Q_m)_j$ boxes of color $m$ for $1 \leq m \leq q -1$ in $Y_j$.   By the defining properties of $(P,Q,\tau)$, each $Y_i$ is an extended Young diagram, $Y_1 \supseteq Y_2 \supseteq \cdots \supseteq Y_k$, and $wt(Y) = k\Lambda_0 - \lambda_{A,B}^\ell$. Further, since the smallest color in $Y_1$ is $-(r-1)$ and $q+r \leq n$, we have $-(r-1) + n \geq q + 1 > 0$ and so $(Y_1[n])_i > 0$ for all $i$. Therefore, $Y_k \supseteq Y_1[n]$ and $(Y_{1}[n])_i > (Y_k)_{i+1}$ for all $i$. Thus,  $Y \in B(k\Lambda_0)_{k\Lambda_0 - \lambda_{A,B}^{\ell}}$.
\end{proof}

\begin{example}\label{ex_as_YZmu} Let $A =(4,3,3), B = (3,2,2,1,1,1), \ell = 10, Y = (Y_1, Y_2, \ldots, Y_7)$ as in Example \ref{ex_as_crystal}.  Counting the number of boxes of color 0 in each extended Young diagram in $Y$, we obtain $\tau = (3,2,2,1,1,1)$. Next, we count the boxes of color -1 and color -2 in each diagram and obtain $P_1 = (2,1,1,1,1)\in S_{4}^\tau $ and $P_2 = (1,1,1) \in  S_3^{P_1}$. Notice that $P_2 = (1,1,1) \vdash 3$, as required. Finally, we count the boxes of color 1, 2, 3, 4, and 5 in each diagram in $Y$, obtaining $Q_1 = (3,2,1,1) \in S_{3}^\tau, Q_2 = (2,2,1) \in S_2^{Q_1}, Q_3 = (2,1)\in S_2^{Q_2}, Q_4 = (2)\in S_1^{Q_3}, Q_5 = (1)\in S_1^{Q_4}$. Now, $(P = (P_1, P_2), Q = (Q_1, Q_2, Q_3, Q_4, Q_5), \tau) \in \mathcal{R}_{A,B}^\ell$. 
\end{example}

\section{Ordered Pairs of Standard Young Tableaux}

In this section, we show that the multiplicity of the weight $k\Lambda_0 - \lambda_{A,B}^\ell$ in $V(k\Lambda_0)$ is given by the the cardinality of a set of certain ordered pairs of standard Young tableaux. In order to prove this, we will use a sliding operation developed by Sch{\" u}tzenberger.  This procedure is often called the jeu de taquin, after a game with sliding pieces (c.f. \cite{F}). We begin with a skew Young tableau, say $\mu \setminus \lambda$.  A box in the deleted diagram $\lambda$ is called an inside corner (or a hole) if  the boxes immediately to its right and below are not in $\lambda$.   A box in $\mu$, but not in $\lambda$, is an outside corner if it does not have a box to its right or below.
 Starting with an inside corner, the procedure is to move the box to the right or below the hole with smaller entry to the hole, creating a hole in a new position. Then repeat the process by moving the box to the right or below with smaller entry to the hole until the hole is at an outside corner at which time we consider the hole to be removed. Once this sliding procedure is performed for each inside corner (or hole) the resulting diagram will be a tableau called the rectification of $\mu \setminus \lambda$.
 % to take an inside corner (called a hole) and look at the neighbors immediately to the right of the hole and immediately below the hole.  We slide whichever is smaller into the hole, thus moving the hole.  If there is only one right or below neighbor, we slide that box into the hole. For the new position of the hole, we again look at neighbors to the right and below and slide accordingly until we are at an outside corner, at which time we consider the hole to be removed.  Once this sliding procedure is performed for each inside corner, the result is known to be a tableau, called the rectification of the skew diagram $\mu \setminus \lambda$.   
The rectification of a skew diagram into a  tableau is reversible (c.f. \cite{F}), given that we know which outside corner each box was removed from. We begin by adding an empty box (hole) to the outside corner from which the last box was removed.  With a sequence of reverse slides, we move the hole to an inside corner.  Specifically,  in a reverse slide, the hole slides into the position of the box to its left or above whichever has the larger entry. The following examples illustrate rectification and its reverse. 
%larger of  the boxes to the left and above it.   One at a time, we continue adding empty boxes to outside corners in the reverse order they had been removed.  We continue in this manner until we have inserted all removed boxes. 

\begin{example}\label{forward_rect_ex}

In Figure \ref{forward_rect}, we begin with a skew diagram. We first consider the inside corner in the third row; the boxes to the right and below this hole are 10 and 5. Since $5 < 10$, we move the 5 into this hole. Next we move the 7 into the hole; then we move the 9; and then the hole is an outside corner and is removed. Now we consider the inside corner in the second row. We compare  5 and 6 and slide the 5 into the hole. Comparing 7 and 10, we move the 7 into the hole. In the next few steps, the hole slides down the first column, becomes an outside corner, and is removed. Finally, we consider the hole in the first row. We compare the 4 and 5 and slide the 4 into the hole. Now, we compare 6 and 8, and slide 6 into the hole. The hole moves down the second column, becomes an outside corner, and is removed. 

\begin{figure}[h]
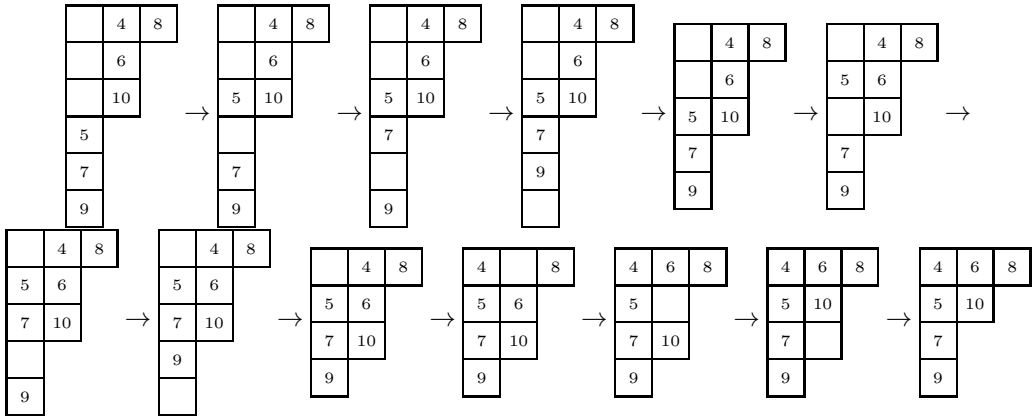


$\tiny{\tableau{ & 4 & 8 \\  & 6 \\  & 10 \\ 5 \\7 \\ 9} \to \tableau{ & 4 & 8 \\  & 6 \\ 5 & 10 \\  \\7 \\ 9} \to \tableau{ & 4 & 8 \\  & 6 \\ 5 & 10 \\7  \\ \\ 9}\to \tableau{ & 4 & 8 \\  & 6 \\ 5 & 10 \\7  \\ 9\\  \\} \to \tableau{ & 4 & 8 \\  & 6 \\ 5 & 10 \\7  \\ 9\\  }\to \tableau{ & 4 & 8 \\  5& 6 \\  & 10 \\7  \\ 9\\ } \to }$ $ \tiny{\tableau{ & 4 & 8 \\  5& 6 \\  7& 10 \\  \\ 9\\ }  \to \tableau{ & 4 & 8 \\  5& 6 \\  7& 10 \\  9\\ \\ }\to \tableau{ & 4 & 8 \\  5& 6 \\  7& 10 \\  9}\to \tableau{ 4&  & 8 \\  5& 6 \\  7& 10 \\  9} \to \tableau{ 4&  6& 8 \\  5&  \\  7& 10 \\  9} \to \tableau{ 4&  6& 8 \\  5&  10 \\  7& \\  9}  \to \tableau{ 4&  6& 8 \\  5&  10 \\  7 \\  9}}$
\caption{Example of rectification}
\label{forward_rect}
\end{figure}

\end{example}

\begin{example}\label{reverse_rect_ex} To show the reverse of the rectification process, we begin with a standard Young tableau, as in Figure \ref{rev_rectification}. For our purposes later in the paper, we add four to each entry. Suppose we know that we must add one box to each of the rows 1, 2, 3, and 6. We work from top to bottom, so we start by adding a hole to the end of the first row. Since there is no box above, we just move the hole to the left. We repeat until the hole is in the first column and has no boxes above it or to its left. Now, we add a hole to the second row. We compare 5 and 6 and since 6 is larger, we slide the 6 into the hole and now the hole is an inside corner. Next, we add an empty box to row 3. We compare the 6 and 7 and since $7 > 6$, we slide 7 into the hole. The hole is now an inside corner.  Finally, we add an empty box to row 6. There is only a box above it, so we slide the 10 into the hole and then the 9. At this point, we again have an inside corner. 

\begin{figure}[h]
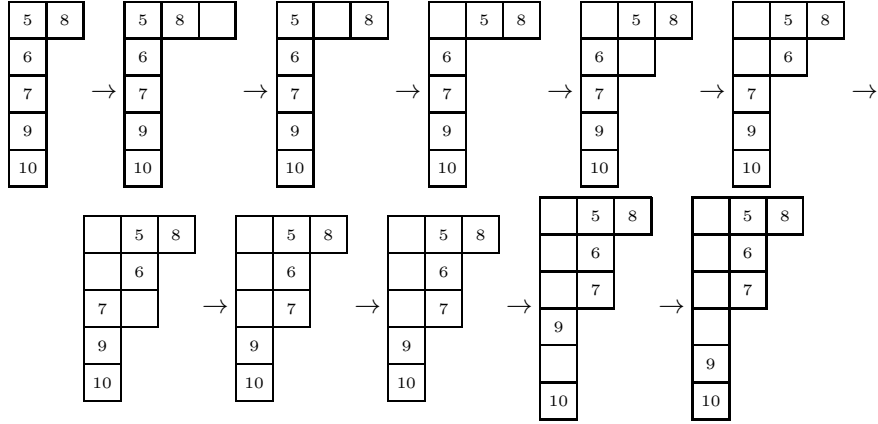


$\tiny{ \tableau{5 & 8 \\ 6 \\ 7 \\ 9 \\ 10} \to  \tableau{5 & 8 & \\ 6 \\ 7 \\ 9 \\ 10} \to  \tableau{5 &  & 8  \\ 6 \\ 7 \\ 9 \\ 10}  \to  \tableau{ & 5 & 8  \\ 6 \\ 7 \\ 9 \\ 10}  \to \tableau{ & 5 & 8  \\ 6 &  \\ 7 \\ 9 \\ 10}  \to \tableau{ & 5 & 8  \\  & 6 \\ 7 \\ 9 \\ 10} \to}$ 

\smallskip

$\tiny{ \tableau{ & 5 & 8  \\  & 6 \\ 7  & \\ 9 \\ 10} \to \tableau{ & 5 & 8  \\  & 6 \\ & 7 \\ 9 \\ 10} \to \tableau{ & 5 & 8  \\  & 6 \\ & 7 \\ 9 \\ 10 \\ } \to \tableau{ & 5 & 8  \\  & 6 \\ & 7 \\ 9 \\  \\ 10} \to \tableau{ & 5 & 8  \\  & 6 \\ & 7 \\  \\ 9  \\ 10}}$

\caption{Example of reverse of rectification}
\label{rev_rectification}
\end{figure}

\end{example}

\begin{lemma}\label{insertColumn}
Let $\tau$ be a standard Young tableau.  Using the reverse of Sch{\" u}tzenberger's sliding operation, if we insert a sequence of boxes in distinct rows, then in the resulting diagram, these boxes will appear in the first column in the same order.

%(call them the $a_1$-box, the $a_2$-box, $\ldots$, the $a_m$-box) in rows $i_1 < i_2 <  \ldots <   i_m$, beginning with the $a_1$-box in row $i_1$ and continuing in order.  In the resulting diagram, all boxes in this sequence will be in the first column of $\tau$ with the $a_j$-box in row $j$ for $j$ varing from 1 to $m$.  

\end{lemma}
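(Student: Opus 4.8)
The plan is an induction on the number $t$ of boxes inserted so far, proving a statement slightly stronger than the lemma. Following Example~\ref{reverse_rect_ex}, process the rows in increasing order $i_1<i_2<\cdots<i_m$, and let $\tau^{(t)}$ be the skew tableau obtained after inserting boxes into rows $i_1,\dots,i_t$; write $\tau^{(t)}_{a,b}$ for the entry in cell $(a,b)$ when that cell carries an entry. The inductive claim $P(t)$ is: the $t$ inserted boxes (the ``holes'') of $\tau^{(t)}$ occupy exactly the cells $(1,1),(2,1),\dots,(t,1)$, with the box inserted at step $s$ lying in cell $(s,1)$; and moreover the monotonicity condition $(\star_t)$ holds, namely $\tau^{(t)}_{t,b}<\tau^{(t)}_{t+1,b-1}$ for every column $b$ for which both $(t,b)$ and $(t+1,b)$ carry entries. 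Granting $P(m)$, the box inserted at step $s$ lies in cell $(s,1)$, so reading the first column of the final diagram from top to bottom recovers the inserted boxes in their order of insertion, i.e. in the order of the increasing row-indices $i_1<\cdots<i_m$; this is exactly the assertion of the lemma.

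For $t=1$ no holes are present, so the inserted hole, which under a reverse slide can only move left or up, cannot halt at a cell in columns $\ge 2$ (its left neighbour is then an honest entry that can be slid) nor in column $1$ below row $1$ (its upper neighbour is then an honest entry); hence it travels to $(1,1)$. Since inserting a box into row $1$ then merely shifts row $1$ one step to the right, $(\star_1)$ is just the strict column-inequalities of the standard tableau $\tau$.

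For the inductive step, assume $P(t)$ and run step $t+1$. As $i_1<\cdots<i_{t+1}$ are distinct positive integers, $i_{t+1}\ge t+1$, so the hole is inserted into a row $\ge t+1$. I would track its monotone (left-and-up) trajectory and show: (i) it cannot halt at a cell of columns $\ge 2$ in any row $\ge t+1$, because the left neighbour there is an honest entry untouched by this slide; (ii) it cannot halt in column $1$ at a row $\ge t+2$, because the cell above is then an honest entry; and (iii) it never passes from a row $\ge t+1$ up into one of the rows $1,\dots,t$. For (iii), moving from $(t+1,b)$ up to $(t,b)$ would require the entry above to exceed the entry to the left, i.e. $\tau^{(t)}_{t,b}>\tau^{(t)}_{t+1,b-1}$; but the hole's column $b$ in row $t+1$ is at most that row's length, which by the partition shape of the outer diagram is at most row $t$'s length, so $(\star_t)$ applies and rules this out (the single exception, $b$ equal to row $t+1$'s length plus one, arises only when $i_{t+1}=t+1$ and is then disposed of by the column-strictness of $\tau$ as in the base case), while in column $1$ the cell $(t,1)$ is itself a hole. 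Hence the hole stays in rows $\ge t+1$, can halt only in column $1$, and therefore comes to rest exactly at $(t+1,1)$, which it must reach since its coordinates strictly decrease; this gives the first half of $P(t+1)$.

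Re-establishing $(\star_{t+1})$ is the delicate step, and the main obstacle. If $i_1,\dots,i_{t+1}$ are exactly $1,\dots,t+1$, then by (iii) each inserted hole slides straight left along its target row, so rows $1,\dots,t+1$ of $\tau^{(t+1)}$ are the corresponding rows of $\tau$ each shifted one step to the right, and $(\star_{t+1})$ is again just the column-inequalities of $\tau$. If instead $i_{t+1}>t+1$, one must follow precisely how the sliding hole permutes the entries of rows $t+1,\dots,i_{t+1}$ on its way to $(t+1,1)$ and then verify the overlap inequality column by column; the condition must be kept to exactly the stated column range, since $\tau^{(t)}_{t,b}<\tau^{(t)}_{t+1,b-1}$ can genuinely fail once $(t+1,b)$ no longer carries an entry, so no global monotonicity statement is available. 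Apart from this verification, the argument is a routine analysis of a single monotone sliding hole. A conceivable shortcut would be to argue contrapositively from the confluence of rectification --- any terminal position other than $(t+1,1)$ would fail to rectify $\tau^{(t+1)}$ back to $\tau$ --- but making that precise appears to need essentially the same trajectory analysis.
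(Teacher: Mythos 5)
Your overall strategy --- induct on the number of inserted holes and track each new hole as a monotone up-and-left path --- is close in spirit to the paper's argument, but the inductive invariant you propose is not actually preserved, so the step you flag as ``the delicate step'' is not merely unfinished: it cannot be completed as stated. Take $\tau$ with rows $(1,2,3)$, $(4,6)$, $(5)$ and insert boxes into rows $2$ and $3$ (so $i_1=2$, $i_2=3$; every intermediate shape is a valid partition). The first hole travels $(2,3)\to(2,2)\to(2,1)\to(1,1)$, producing $\tau^{(1)}$ with first row $\bullet,2,3$ and second row $1,4,6$. Here $\tau^{(1)}_{1,2}=2>1=\tau^{(1)}_{2,1}$, so $(\star_1)$ already fails; note also that your base case tacitly assumes $i_1=1$ (``inserting a box into row $1$ merely shifts row $1$''), which need not hold. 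The lemma's conclusion is still true in this example --- the second hole goes $(3,2)\to(3,1)\to(2,1)$ --- but only because it never visits the cell $(2,2)$ where the comparison is bad. This failure is generic: after the $(t+1)$-st slide, the entry dropped from $(t+1,s)$ into $(t+2,s)$ (where $s$ is the column at which the path crossed between those rows) gives $\tau^{(t+1)}_{t+2,s}<\tau^{(t+1)}_{t+1,s+1}$ whenever row $t+1$ extends past column $s$, since rows increase left to right. So no invariant phrased as an entry comparison along the full overlap of rows $t+1$ and $t+2$ can survive the induction.

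What actually makes the lemma work, and what your hypothesis $P(t)$ must record, is positional: each new hole's path stays weakly to the left of the previous hole's path, so the $(t+1)$-st hole crosses from row $j+1$ to row $j$ at a column no greater than where the $t$-th hole did, and in that range the relevant comparisons reduce to column-strictness of the original tableau. This is precisely the step the paper's own proof leans on (``the reverse rectification process ensures that it does so to the left of column $c_1$''), with the comparison $\sigma_{2,i}>\sigma_{1,i}$ invoked only for $i\le c_1$. To repair your argument, strengthen $P(t)$ to include the crossing columns of the $t$-th path, prove the non-crossing claim by following the two consecutive paths row by row, and assert the entry comparison only for columns weakly to the left of the previous crossing column in each row, where it does hold.
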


\begin{proof}
Suppose we begin with a standard Young tableau, $\sigma$, with entries $\sigma_{i,j}$ in row $i$ and column $j$. We insert the first box at the end of row $t$ and after a series of reverse slides, it must become an inside corner in the first row of the first column. If $t >  1$, then one box from each of rows 1 through $t-1$ was moved down one row as the hole moved up to row one. Suppose the box that moved from the first row to the second was in column $c_1$. Then rows one and two of the diagram after the first hole is inserted are as in Figure \ref{rows1and2}. When we add the second hole to the diagram it must eventually enter row two. The reverse rectification process ensures that it does so to the left of column $c_1$. Since $\sigma_{2,i} > \sigma_{1,i}$ for $1 \leq i \leq c_1$, the hole must stay in row two and will become an inside corner in the first column. The case where $t=1$ is similar, but with no box moved down to row two. Analogously, it can be shown that each additional hole added to successively lower rows will be in the first column when it becomes an inside corner. 

% If $t=1$, the hole moves across the first row and finishes in the first column as in Figure \ref{rows1and2_ain1}. When the next hole is inserted into some row below the first row, the hole eventually will slide to row two. Because $\sigma_{1,i} < \sigma_{2,i}$ for all $i$, the hole will never slide to row one, and so will become an inside corner in the second row of the first column. 

%\begin{figure}[h]
%
%$\small{\tableau[l]{
%\fr[] & \fr[] (1) &\fr[] (2) &\fr[] (3) &\fr[] (4) &\fr[] (5) &\fr[]  \\
% \fr[] (1) & & \sigma_{1,1} & \sigma_{1,2} & \sigma_{1,3}& \sigma_{1,4} &  \ldots \\ \fr[] (2) & \sigma_{2,1} & \sigma_{2,2} & \sigma_{2,3} &  \sigma_{2,4} & \sigma_{2,5}  & \ldots} }$
%
%\label{rows1and2_ain1} 
%\caption{Rows 1 and 2 of $\sigma$ after first hole inserted in row 1}
%\end{figure}
%
%
%
%

\begin{figure}[h]

$\small{\tableau[l]{
\fr[] & \fr[] (1) &\fr[] (2) &\fr[] (3) &\fr[] (4) &\fr[] &\fr[] (c_1-1) &\fr[] (c_1) &\fr[] (c_1+1) &\fr[]  \\
 \fr[] (1) & & \sigma_{1,1} & \sigma_{1,2} & \sigma_{1,3}& \ldots & \sigma_{1,c_1 -2} & \sigma_{1,c_1-1} & \sigma_{1,c_1+1}  & \ldots \\\fr[] (2) &
\sigma_{2,1} & \sigma_{2,2} & \sigma_{2,3} & \sigma_{2,4} & \ldots & \sigma_{2,c_1-1} & \sigma_{1,c_1} & \sigma_{2,c_1} & \ldots } }$

\caption{Rows 1 and 2 of $\sigma$ after first hole inserted below row 1}
\label{rows1and2}
\end{figure}

\end{proof}

\begin{lemma}\label{sameinrev}
In a standard Young tableau, $\sigma$, where entry $a$ is in a row above the row with entry $a+1$, $a$ will remain in some row above the row with entry $a+1$ after any number of insertions using reverse slides. 
\end{lemma}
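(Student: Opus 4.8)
The plan is to analyze a single insertion using a reverse slide and show it cannot put $a+1$ into a row at or above the row containing $a$; the general statement then follows by induction on the number of insertions, since after one insertion we again have a standard Young tableau (the reverse jeu de taquin produces one) in which $a$ sits strictly above $a+1$. So it suffices to prove: if $\sigma$ is a standard Young tableau with $a$ in row $i$ and $a+1$ in row $i'$ with $i < i'$, then after one reverse slide the new tableau $\sigma'$ has $a$ in some row strictly above the row of $a+1$.

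First I would fix notation and recall how a reverse slide moves entries. A reverse slide starts by adjoining an empty box at an outside corner and repeatedly swaps the hole with the larger of the two neighbors immediately above it or immediately to its left, until the hole reaches an inside corner. Thus each entry that moves during the slide moves either straight up by one row (if it was the box above the hole) or stays in its row (if it was the box to the left of the hole); no entry ever moves down. In particular the row index of every entry is non-increasing under a reverse slide. Call the cells visited by the hole the \emph{slide path}; only entries on the slide path change position, and each such entry either moves up one row or stays in place.

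The key step is to rule out the bad configuration. Since row indices only decrease or stay fixed, the only way the conclusion can fail is if $a+1$ moves up to row $i$ (or higher) while $a$ stays in row $i$, i.e., $a+1$ is pushed up to the row of $a$ or above. Suppose for contradiction $a+1$ moves from row $i'$ up into some row $\le i$. Because the hole moves up through consecutive rows, at the moment the hole is in row $i'$ the entry directly above it, in row $i'-1$, is compared against the entry to the hole's left; the larger moves into the hole. For $a+1$ to be the entry that moves up, $a+1$ must be directly below the hole and must exceed its left-neighbor. But in a standard Young tableau, the entry directly above $a+1$ is smaller than $a+1$; call it $x$, so $x < a+1$, hence $x \le a$. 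Now I would chase the slide: for the hole to continue up past $a+1$'s new position it must reach row $i$ or above, which means at the step when the hole is about to leave row $i$, the entry it pulls up from row $i$ is compared with its left-neighbor; tracking which cell of row $i$ the hole occupies, and using that $a$ lies in row $i$ to the left of that cell (columns are strictly increasing down each column and the path of a reverse slide is "staircase-monotone" — its column index never increases as it rises), one shows $a$ would have to move up too, or else $a+1$ ends up strictly right of and below $a$'s row, contradicting that $a+1$ reached a row $\le i$. Assembling these inequalities yields the contradiction.

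I expect the main obstacle to be the bookkeeping in that last step: precisely controlling the column of the hole as it ascends and relating it to the columns of $a$ and $a+1$, so as to conclude that $a+1$ cannot "overtake" $a$. The cleanest route is probably to invoke the standard fact (c.f. \cite{F}) that a reverse slide path is a lattice path going up and to the left, together with the observation that within any single column the relative vertical order of two entries is never reversed by a slide; combined with the semistandard/standard inequalities along rows, this pins down that if $a+1$ were pushed into row $i$ then $a$ (being in row $i$, strictly left, hence on or left of the path) would be pushed into row $i-1$ or higher, preserving the strict inequality of rows. Once the single-slide case is in hand, the induction is immediate.
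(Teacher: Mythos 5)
There is a genuine gap, and it begins with the mechanics of the reverse slide. You assert that in a reverse slide ``each entry that moves \dots moves either straight up by one row \dots or stays in its row; no entry ever moves down,'' and you conclude that row indices of entries are non-increasing. This is backwards: in a reverse slide the \emph{hole} moves up or to the left, so each displaced entry moves \emph{down} one row or \emph{right} one column. Row indices of entries are therefore non-decreasing, and the configuration you must rule out is not ``$a+1$ is pushed up into the row of $a$'' (which cannot happen) but ``$a$ descends into the row of $a+1$.'' Your entire case analysis targets the impossible event and never addresses the real one. Moreover, even within your own framework the decisive step is not carried out: the argument trails off into ``one shows \dots,'' and you yourself flag the column bookkeeping as the unresolved obstacle, so no contradiction is actually assembled.

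The repair is far shorter than the machinery you set up, and it is essentially the paper's proof. Consider the first step at which the conclusion would fail: the $a$-box slides down into the row containing $a+1$. Since entries increase along rows and no integer lies strictly between $a$ and $a+1$, the $a$-box must land in the cell immediately to the left of the $(a+1)$-box; hence just before that step $a$ occupied the cell one row up and one column to the left of $a+1$. The cell directly above $a+1$ then lies in $a$'s old row, strictly to the right of $a$'s old cell, so its entry exceeds $a$; but it also lies directly above $a+1$, so its entry is less than $a+1$. No such integer exists, a contradiction. No tracking of the slide path or of column indices is required, and the statement for many insertions follows by applying this to each elementary swap in turn.
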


\begin{proof}
For the sake of contradiction, suppose that the $a$-box moves down to the row with the $(a+1)$-box, filling in a position immediately to the left of the $(a+1)$-box. This means that in the preceding step, the $a$-box was above and to the left of the $(a+1)$-box, which means that immediately above the $(a+1)$-box is a box with entry larger than $a$ and less than $a+1$, which cannot happen. 
\end{proof}

%\begin{proof}  Call the boxes with entries $a$ and $a+1$ the $a$-box and the $(a+1)$-box.  When a new box is added and begins moving by reverse slides, boxes already in the diagram can only move right and down,  so we are concerned only with the $a$-box moving down to the row containing the $(a+1)$-box.  Thus we  look at arrangements of $a$-box and the $(a+1)$-box in consecutive rows: the $a$-box to the left of, above, or to the right of the $(a+1)$-box.  Suppose the $a$-box is the left of the $(a+1)$-box, as in Figure \ref{a_a+1_diff_rows}(a).  (A different number of columns doesn't change the argument.) Then $w,x,y,$ and $z$ all must be larger than $a$ and smaller than $a+1$, which is impossible.  Now, suppose the $a$-box is above the $(a+1)$-box as in Figure \ref{a_a+1_diff_rows}(b). The $(a+1)$-box cannot move right in order to allow the $a$-box to move down because $x > a+1$. Finally, suppose the $a$-box is to the right of the $(a+1)$-box, as in Figure \ref{a_a+1_diff_rows}(c).  If the $(a+1)$-box moves to the right, then $a$ and $a+1$ are in the same column as before.  Thus, $a$ and $a+1$ will never be in the same row and hence $a$ will always remain above $a+1$.  
%
%\begin{figure}[h]
%\begin{multicols}{3}
%
%$\tableau{  a&  w & x \\ y & z & a+1}$ 
%
%\smallskip
%
%(a)
%
%
%
%$\tableau{ w & a & x \\ y & a+1 & z} $
%
%\smallskip
%
%(b)
%
%
%$\tableau{t & a  \\ a+1 & y }$
%
%\smallskip
%
%(c)
%
%\end{multicols}
%
%\caption{Figures for Lemma}
%\label{a_a+1_diff_rows}
%\end{figure}
%\end{proof}

For fixed admissible pairs of partitions $(A =\{a_1 \geq a_2 \geq \cdots \geq a_r \} \vdash \ell$,  $B =\{b_1 \geq b_2 \geq \cdots \geq b_q \}\vdash \ell)$ we define sets  $U_1 = \{1, 2, \cdots , a_1\},
U_2 = \{a_1 + 1, a_1 + 2, \cdots , a_1 + a_2\}, \cdots , U_r = \{\sum_{i=1}^{r-1}a_i + 1, \sum_{i=1}^{r-1}a_i +2, \cdots , \sum_{i=1}^r a_i = \ell\}$ and $V_1 = \{1, 2, \cdots , b_1\},
V_2 = \{b_1 + 1, b_1 + 2, \cdots , b_1 + b_2\}, \cdots , V_q = \{\sum_{i=1}^{q-1}b_i + 1, \sum_{i=1}^{q-1}b_i +2, \cdots , \sum_{i=1}^q b_i = \ell\}$. Observe that $U_1\cup U_2\cup \cdots \cup U_r$ and $V_1\cup V_2\cup \cdots \cup V_q$ are both partitions of the set $\{1, 2, \cdots , \ell\}$.

We define $\mathcal{S}_{A,B}^\ell$ to be the set  of ordered pairs of standard Young tableaux $(M,N)$ satisfying the following conditions: 
\begin{enumerate}
\item $M$ and $N$ are standard Young tableaux of the same shape, each with $\ell$ boxes and at most  $k$ rows. Also the first row in each has at most $\min\{r,q\}$ number of boxes.
\item For each $j$, if $j_1, j_2 \in U_j \  ({\rm  resp.} \  V_j)$ and $j_1 < j_2$, then in $M$ (resp. $N$) $j_1$ must occur in a row above the row containing $j_2$.
\end{enumerate}
 
\begin{theorem}\label{pairsSYT} The multiplicity of $k\Lambda_0 - \lambda_{A,B}^\ell$ in $V(k\Lambda_0)$ is equal to $|\mathcal{S}_{A,B}^\ell|$. 
\end{theorem}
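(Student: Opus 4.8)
The plan is to deduce the result from Theorem~\ref{card-R}: that theorem already identifies the multiplicity of $k\Lambda_0-\lambda_{A,B}^{\ell}$ in $V(k\Lambda_0)$ with $|\mathcal{R}_{A,B}^{\ell}|$, so it suffices to build a bijection
\[
\Phi\colon \mathcal{R}_{A,B}^{\ell}\longrightarrow \mathcal{S}_{A,B}^{\ell}.
\]
Given $(P,Q,\tau)\in\mathcal{R}_{A,B}^{\ell}$ I would set $\Phi(P,Q,\tau)=(M,N)$, where $M$ is built from $(\tau,P)$ and $N$ from $(\tau,Q)$ by the same recipe, recursive in the number of parts. For the base case $r=1$ the chain is $\tau=(1^{a_1})$ and $M$ is this column filled by $1,2,\dots,a_1$ from top to bottom. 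For $r\ge 2$, first build by the same recipe the tableau $M'$ attached to the shorter chain $P_1\supseteq P_2\supseteq\cdots\supseteq P_{r-1}=(1^{a_r})$ and the partition $(a_2,\dots,a_r)$; it is a standard Young tableau of shape $P_1$ on the entries $1,\dots,\ell-a_1$. Add $a_1$ to every entry of $M'$, and then apply the reverse of Sch\"utzenberger's rectification, inserting a box at the end of each row that occurs in the skew diagram $\tau/P_1$ (since $P_1\in S_{a_1}^{\tau}$ these are $a_1$ distinct rows), processing those rows from top to bottom. The outer shape of the resulting skew tableau is $P_1\cup(\tau/P_1)=\tau$, and by Lemma~\ref{insertColumn} the $a_1$ inserted boxes fill the top $a_1$ cells of the first column; filling those cells with $1,2,\dots,a_1$ from top to bottom produces $M$, a standard Young tableau of shape $\tau$.

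I would then check that $(M,N)\in\mathcal{S}_{A,B}^{\ell}$. Both tableaux have shape $\tau$, so the hypotheses $l(\tau)\le k$ and $\tau_1\le\min\{r,q\}$ in the definition of $\mathcal{R}_{A,B}^{\ell}$ become exactly the row bound and first-row bound in part (1) of $\mathcal{S}_{A,B}^{\ell}$. For part (2): in $M$ the entries $1,\dots,a_1$ occupy the first $a_1$ cells of the first column, hence lie in strictly increasing rows; for each later block $U_j$ ($j\ge 2$) the required row monotonicity already holds in $M'$ (after the shift by $a_1$ the blocks of $(a_2,\dots,a_r)$ become $U_2,\dots,U_r$), and by Lemma~\ref{sameinrev} the reverse slides used to pass from $M'$ to $M$ never move an entry $a$ below $a+1$, so that monotonicity survives. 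The same argument handles the blocks $V_j$ for $N$.

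For the inverse map I would use the reversibility of rectification together with condition (2). If $(M,N)\in\mathcal{S}_{A,B}^{\ell}$, condition (2) for the block $U_1=\{1,\dots,a_1\}$ forces, cell by cell, the entry $j$ of $M$ to lie at position $(j,1)$ for $1\le j\le a_1$ --- the only way $1,\dots,a_1$ can occupy distinct increasing rows of a standard Young tableau. Deleting those cells leaves a skew standard tableau of shape $\tau/(1^{a_1})$ on $\{a_1+1,\dots,\ell\}$; forward-rectifying it and subtracting $a_1$ recovers $M'$, of some shape $P_1$, while recording the outside corners vacated recovers $\tau/P_1$ and hence the datum $P_1\in S_{a_1}^{\tau}$. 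Iterating on $M'$ recovers the chain $P_1\supseteq\cdots\supseteq P_{r-1}=(1^{a_r})$, $N$ recovers $Q$ the same way, and the common outer shape returns $\tau$. Once one checks that $\Phi$ and this map are mutually inverse, $|\mathcal{S}_{A,B}^{\ell}|=|\mathcal{R}_{A,B}^{\ell}|$, and the theorem follows from Theorem~\ref{card-R}.

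The step I expect to be the main obstacle is proving that these two recursive procedures are genuinely inverse to each other --- in particular, that forward-rectifying $\tau/(1^{a_1})$ always yields a shape $P_1$ contained in $\tau$ for which $\tau/P_1$ consists of exactly one cell in each of $a_1$ distinct rows (equivalently, that the outer shape created by the reverse slides is $\tau$ and is reached uniquely), and that the block to which each entry belongs is correctly tracked through a potentially long run of slides at every level of the recursion. Lemmas~\ref{insertColumn} and \ref{sameinrev} are precisely the tools for this --- the first pins down where each inserted batch lands, the second protects the order relations between consecutive entries --- and once a single induction on the number of parts is set up that simultaneously controls the outer shape, the location of the freshly inserted first-column cells, and the block-wise row monotonicity, matching the remaining side conditions is routine.
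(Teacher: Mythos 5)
Your proposal is correct and follows essentially the same route as the paper: it reduces to Theorem~\ref{card-R} and constructs the bijection with $\mathcal{R}_{A,B}^{\ell}$ by iterated reverse jeu de taquin insertions (controlled by Lemmas~\ref{insertColumn} and~\ref{sameinrev}) in one direction and by deleting the first-column block $U_1$ (resp.\ $V_1$) and rectifying in the other. Your recursive phrasing is only a cosmetic variant of the paper's iterative construction starting from $P_{r-1}^{*}$.
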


\begin{proof}
By Theorem \ref{card-R}, it suffices to show that there is a bijection between the sets $\mathcal{R}^\ell_{A,B}$ and $\mathcal{S}^\ell_{A,B}$. 

First, let $(P,Q,\tau) \in \mathcal{R}^\ell_{A,B}$. We use $P$ to create a standard Young tableau of shape $\tau$.  Recall that $P_{r-1}$ has largest part one and so we can uniquely form a standard Young tableau by placing the entries in $U_r$. We call this standard Young tableau $P_{r-1}^*$.    Next, we compare the shapes of $P_{r-2}$ and $P_{r-1}$.  Since $P_{r-1} \in S_{a_{r-1}}^{P_{r-2}}$, there are $a_{r-1}$ additional entries in $P_{r-2}$ in $a_{r-1}$ different rows, and we insert $a_{r-1}$ empty boxes into $P_{r-1}^*$ using the steps in the reverse sliding procedure.  Specifically, we insert the first box at the end of the first row in which $P_{r-2}$ is different than $P_{r-1}$, the second box in the next row that differs, and so on.  It follows from Lemma \ref{insertColumn} that these entries appear in the first column of the new diagram of shape $P_{r-2}$.  We fill in the entries in $U_{r-1}$ and call this standard Young tableau $P_{r-2}^*$.  We continue in this manner, creating a standard Young tableau $P_{r-3}^*$ of shape $P_{r-3}$ and so on until we create a standard Young tableau $P_1^*$ of shape $P_1$.  Finally, we create a standard Young tableau of shape $\tau$ by inserting $a_1$ boxes into $P_1^*$ in the appropriate rows.   Again, by Lemma \ref{insertColumn}, we have $a_1$ empty boxes in the first column.  We label these boxes 1 through $a_1$ and call the resulting standard Young tableau $M$.  When each $P_j^*$ is formed, the entries in the set $U_j$ are in its first column.  By Lemma \ref{sameinrev}, those entries remain in increasing rows even after more reverse slides occur.  Thus each of the sets $U_1, U_2, \cdots , U_r$ appear as entries in strictly increasing rows in $M$. Analogously, we use the tuples in $Q$ to construct a standard Young tableau $N$ of shape $\tau$ with the entries in each of the sets $V_1, V_2, \cdots , V_q$ appearing in increasing rows.  Then  $(M,N) \in \mathcal{S}_{A,B}^\ell$ by the construction.

Now, we let $(M,N) \in \mathcal{S}_{A,B}^\ell$.  To form $(P,Q,\tau) \in \mathcal{R}_{A,B}^\ell$, we first set $\tau$ to be the shape of $M$.  We note that since $1, 2, 3, \ldots, a_1$ appear in rows of  strictly increasing order in $M$, they must appear in the first column.  We remove these $a_1$ boxes and use Sch{\" u}tzenberger's sliding operation to form another standard Young tableau, which we call $P^*_1$ of shape $P_1$.  The process ensures that  each box  is removed from a row higher than the one from which a previous box was removed and so $P_1 \in S_{a_1}^\tau$.  Now, since $a_1 + 1, a_1 +2, \ldots, a_1 + a_2$ are the smallest entries in $P_1^*$ and appear in strictly increasing rows, these entries are in the first column of $P_1^*$.  We remove these boxes and use the sliding operation to remove each box from a different row.  The resulting standard Young tableau we call $P_2^*$ with shape $P_2$.  We continue in this way until we create the standard Young tableau $P_{r-1}^*$ with shape $P_{r-1} \in S_{a_{r-1}}^{P_{r-2}}$.  Now, set $P = (P_1, P_2, \ldots, P_{r-1})$ and similarly create $Q = (Q_1, Q_2, \ldots, Q_{q-1})$  from $N$. Then the triple $(P,Q,\tau) \in \mathcal{R}_{A,B}^\ell$ by construction, which proves the result. 
\end{proof}

\begin{example}\label{ex_as_MN} Continuing with the tuple $(P,Q,\tau)$ in Example \ref{ex_as_YZmu}, we form $M$, as in Figure \ref{makeMN}(a).  We start by placing the entries 8, 9, and 10 in a standard Young tableau of shape $P_2$.  Next, we compare $P_1$ and $P_2$ and we see that in $P_1$, there are 3 additional boxes, specifically in rows 1, 4, and 5.  We use reverse slides to insert a box into row 1, then row 4, and finally row 5.  At the conclusion of these reverse slides, we have three empty boxes in column 1 of a  tableau of shape $P_1$.  We label these boxes with 5, 6, and 7.  Finally, we compare the shapes of $\tau$ and $P_1$ to see that $\tau$ has 4 more boxes, in rows 1, 2, 3, and 6.  As in Example \ref{reverse_rect_ex}, we insert empty boxes into our standard Young tableau of shape $P_1$ in row 1, then row 2, then row 3, and finally row 6, culminating with a tableau of shape $\tau$ with four empty boxes in the first column.  We label these boxes 1 through 4 and call the result $M$.  Note that 5, 6, and 7 and also 8, 9, and 10 are still in strictly increasing rows.  To form $N$, as in Figure \ref{makeMN}(b), we follow a similar procedure, obtaining a sequence of standard Young tableaux of shapes $Q_5, Q_4, Q_3, Q_2, Q_1, \tau$.  Notice that in the first diagram in which each of the sets $V_1 = \{1,2,3\}, V_2 = \{4,5\}$, and $V_3 = \{6,7\}$ appears, all elements are in the first column.  In $N$, the elements of each of these sets are in strictly increasing rows.  For details on the map in the other direction, which uses forward slides, see the rectification of $N$ with entries 1,2, and 3 removed in Example \ref{forward_rect_ex}.

\begin{figure}[h!]
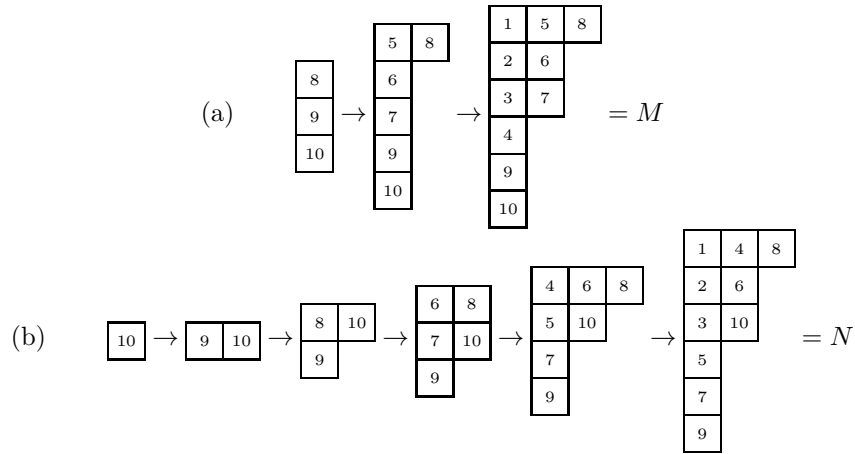


(a) \qquad $ \tiny{ \tableau{8 \\ 9 \\ 10} \rightarrow \tableau{5 & 8 \\ 6 \\ 7 \\ 9 \\10} \rightarrow \tableau{1 & 5 & 8 \\ 2 & 6 \\ 3 & 7 \\ 4 \\ 9 \\ 10} }= M$

(b) \qquad $ \tiny{ \tableau{10} \rightarrow \tableau{9 & 10} \rightarrow \tableau{8 & 10 \\ 9} \rightarrow  \tableau{6 & 8 \\ 7 & 10 \\ 9} \rightarrow \tableau{4 & 6 & 8 \\ 5& 10 \\ 7 \\ 9} \rightarrow \tableau{1 & 4 & 8 \\ 2 & 6 \\ 3 & 10 \\ 5 \\7 \\ 9} }= N$

%(c) $(M, N) =  \left (\tableau{1 & 5 & 8 \\ 2 & 6 \\ 3 & 7 \\ 4 & 9 \\ 10}, \tableau{1 & 4 & 8 \\ 2 & 6 \\ 3 & 7 \\ 5 & 10 \\ 9} \right )$

\caption{Obtaining $(M,N)$}
\label{makeMN}
\end{figure}

\end{example}

\section{Pattern-Avoiding Permutations}

In this section we give the multiplicity of any maximal dominant weight $k\Lambda_0 - \lambda_{A,B}^\ell$ in $V(k\Lambda_0)$ as the number of certain pattern-avoiding permutations of $[\ell] = \{1, 2, \ldots, \ell \}$. For this we use the well-known RSK correspondence (c.f. \cite{Sta}), which associates each ordered pair $(M, N)$ of standard Young tableaux of same shape with $\ell$ boxes with certain pattern-avoiding permutation of $[\ell]$. One direction of the correspondence (call it the reverse algorithm) goes as follows. We begin with an ordered pair of standard tableaux $(M,N)$. We remove the largest box in $N$ and apply the reverse bump to the box in the corresponding location in $M$, say with entry $d$.  To do so, we place $d$ in the next highest row, replacing the largest entry less than $d$, say $e$, and  bumping that box up to the next highest row.  Now, we replace the largest entry in that row less than $e$ and bump that box up to the next highest row.  We continue this process until we remove a box from the first row.  The entry in this box becomes the last entry $w_\ell$ in the permutation of $[\ell]$ we are constructing.  We repeat with consecutively smaller boxes in $N$, filling in the words in the permutation $w$ from right to left.  This algorithm is known to be reversible.  In the following example we apply the RSK algorithm to construct a permutation of $[\ell]$ associated with a certain ordered pair of standard Young tableaux $(M, N)$.

\begin{example}  Using $(M,N)$ as in Example \ref{ex_as_MN},  we create a permutation (see Figure \ref{makeperm}).  First, we see that 10  appears in the second column of the third row of $N$.  Thus, we start the reverse bump process with the corresponding entry in $M$, which is 7.  We bump 7 to the next higher row, replacing the next entry smaller than it, which is a 6.  The 6, in turn bumps to the first row, replacing the 5, and so we have $w_{10} = 5$.  Next, we see that 9 is in the last row of the current $N$, so we bump the 10 in the corresponding position of the current $M$.  Continuing this process with each consecutively smaller box from $N$, we obtain the permutation  $w = 0947362815$ (representing 10 with 0). Notice that the entries in $U_1 = \{1,2,3,4\}$ appear in decreasing order in $w$, as do the entries in $U_2 = \{5,6,7\}$ and $U_3 = \{8, 9, 10\}$. Further, note that $w_1 > w_2 > w_3$, $w_4 > w_5$, and $w_6 > w_7$ and $V_1 = \{1,2,3\}$, $V_2 = \{4,5\}$ and $V_3 = \{6,7\}$. 

\begin{figure}[h]
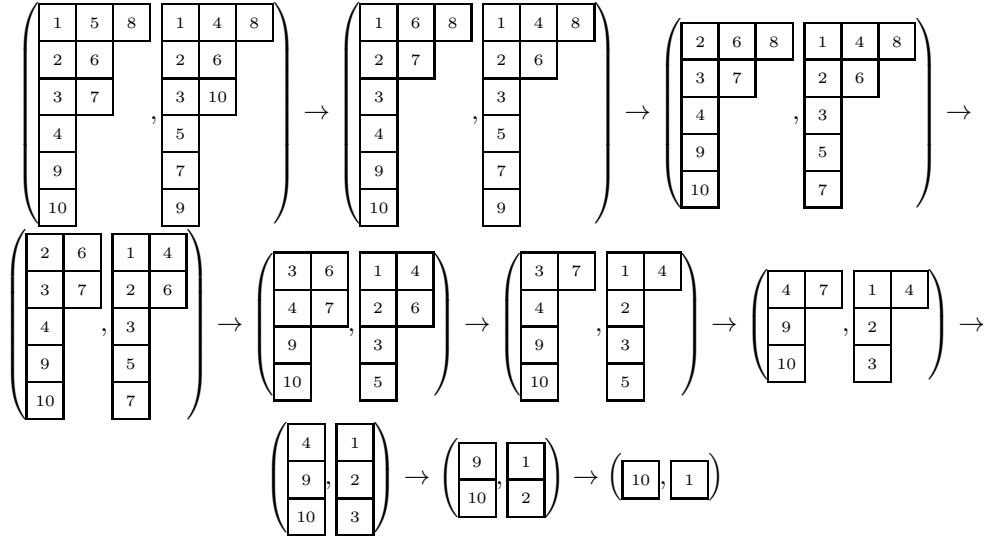


{ $\tiny{\left (\tableau{1 & 5 & 8 \\ 2 & 6 \\ 3 & 7 \\ 4 \\ 9 \\ 10}, \tableau{1 & 4 & 8 \\ 2 & 6 \\ 3 & 10 \\ 5 \\7 \\ 9} \right ) 
\to \left (\tableau{1 & 6 & 8 \\ 2 & 7 \\ 3 \\ 4 \\ 9 \\  10}, \tableau{1 & 4 & 8 \\ 2 & 6 \\ 3  \\ 5 \\7 \\ 9} \right ) \to \left (\tableau{2 & 6 & 8 \\ 3 & 7 \\ 4 \\ 9 \\  10}, \tableau{1 & 4 & 8 \\ 2 & 6 \\ 3  \\ 5 \\7 } \right ) \to }$ }

{$\tiny{\left (\tableau{2 & 6  \\ 3 & 7 \\ 4 \\ 9 \\  10}, \tableau{1 & 4 \\ 2 & 6 \\ 3  \\ 5 \\7 }  \right )  
\to \left (\tableau{3 & 6  \\ 4 & 7 \\  9 \\  10},\tableau{1 & 4  \\ 2 & 6 \\ 3  \\ 5  } \right )\to \left (\tableau{3 & 7  \\ 4  \\  9 \\  10}, \tableau{1 & 4 \\ 2  \\ 3  \\ 5  }  \right )  \to  \left (\tableau{4 & 7  \\ 9  \\  10}, \tableau{1 & 4 \\ 2 \\ 3   }  \right ) \to }$ }

{$\tiny{ \left (\tableau{4  \\ 9  \\  10}, \tableau{1   \\ 2  \\ 3  } \right ) \to  \left (\tableau{ 9 \\  10}, \tableau{1   \\ 2   } \right )  \to  \left (\tableau{ 10}, \tableau{1     } \right ) }$
}
\caption{RSK correspondence applied to $(M,N)$}
\label{makeperm}
\end{figure}

\end{example}

Recall that  a $(k+1)k\ldots21$-avoiding permutation of $[\ell]$ is a permutation with no decreasing subsequence of length $k+1$.  It is known that  using the RSK correspondence (c.f. \cite{Sta}, Corollary 7.23.12)  the number of ordered pairs of standard Young tableaux with the same shape $\mu \vdash \ell$ and less than or equal to $k$ rows is the same as the number of $(k+1)k(k-1)\cdots21$-avoiding permutations of $[\ell]$.

To prove the main result of this section, we need the following consequences of the RSK correspondence.

\begin{lemma}\label{Mlemma}
Suppose the permutation $w$ corresponds with the ordered pair of standard Young tableaux $(M,N)$ through the RSK correspondence.  Then $a$ appears to the right of $a-1$ in $w$ if and only if $a$ is in a row below $a-1$ in $M$. 
\end{lemma}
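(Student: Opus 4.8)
The plan is to analyze how the RSK reverse algorithm (the one described just before the lemma, which reads off the permutation $w$ from right to left) interacts with the relative position of the boxes labeled $a-1$ and $a$ in $M$. The key observation is that in the reverse algorithm, at each stage we select the box of $N$ with the currently largest entry, locate the box in the same position of $M$, and perform a reverse bump starting there; the entry that is eventually ejected from the first row of $M$ becomes the next letter of $w$ (filled in from right to left). So letter $a$ becomes $w_j$ for some $j$ and letter $a-1$ becomes $w_{j'}$ for some $j'$, and we must show $j > j'$ (i.e. $a$ is to the right of $a-1$) precisely when the cell of $M$ containing $a$ lies in a strictly lower row than the cell containing $a-1$. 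First I would recall the standard fact that the reverse RSK algorithm is inverse to the usual row-insertion algorithm, so it is equivalent — and cleaner — to argue using the forward direction: when we build $M$ by successively row-inserting the letters $w_1, w_2, \ldots, w_\ell$, the cell that receives label $m$ in $N$ is created at the step when $w_m$ is inserted. Thus the cell of $M$ labeled $a$ is the new cell created when $w_a$ is inserted, and the cell labeled $a-1$ is the new cell created when $w_{a-1}$ is inserted.

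Next I would invoke the classical monotonicity property of Schensted insertion governing where successive new cells appear. Precisely: if we insert a value $x$ and then a value $y$ into a tableau, the new cell created by the second insertion lies in a row \emph{weakly below} the row of the new cell created by the first insertion if and only if $y < x$, and strictly below if $y < x$ while strictly above-or-equal behavior occurs when $y \ge x$; more to the point, if $w_{a-1}$ is inserted before $w_a$, then the new cell for $w_a$ is strictly below the new cell for $w_{a-1}$ iff $w_a < w_{a-1}$, and weakly above (in fact in a row $\le$) iff $w_a > w_{a-1}$. Since all entries are distinct, exactly one of these holds. Now there are two cases according to the order in which $a-1$ and $a$ appear in $w$. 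If $a-1$ appears before $a$ in $w$ (that is, $w$ reads $\ldots a-1 \ldots a \ldots$), then $a-1$ is inserted first, its cell is labeled $a-1$ in $N$ — wait, this needs care: the labels in $N$ record the \emph{order} of insertion, so the cell created at the $m$-th insertion gets label $m$ in $N$, not the value $w_m$. So I must be careful to distinguish: I want to compare the cell of $M$ containing the \emph{value} $a$ versus the \emph{value} $a-1$, and these values sit in cells of $M$, while the labels $a-1, a$ of $N$ mark the $(a-1)$-th and $a$-th insertions. Let me restate: the cell containing value $v$ in $M$ is the cell that is left behind (in the first row) after the insertion-bump chain initiated by $v$ terminates — no, that's again the reverse picture. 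Cleanest: value $v$ occupies, at the end, some cell of $M$; I track when $v$ enters the first row and where the bumping caused by inserting later letters pushes it. I will use the well-known refinement (see e.g. \cite{F} or \cite{Sta}): in the tableau $M$ built from $w$, the value $a$ lies strictly below the value $a-1$ if and only if $a$ appears after $a-1$ in the word $w$. This is precisely a restatement of the lemma, and its standard proof is an induction on the length of the word using the fact that inserting $a$ after $a-1$ has already been inserted causes $a$ not to bump $a-1$ (since $a$ is larger), so $a$ comes to rest in a row weakly below the row where $a-1$ then sits, and a short argument upgrades ``weakly'' to ``strictly''; whereas if $a$ is inserted before $a-1$, then inserting $a-1$ bumps $a$ (the smallest entry $>a-1$ in its row, if $a$ is in that row) or otherwise does not move it downward, keeping $a$ weakly above.

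So the concrete steps I would write out are: (1) reduce to the forward insertion picture via invertibility of RSK; (2) set up the induction on $\ell$, the only interesting step being the insertion of whichever of $a-1$, $a$ comes second in $w$; (3) in the case $a$ after $a-1$: show that when $a$ is row-inserted into the tableau already containing $a-1$, it is never bumped by the insertion of $a-1$'s predecessors in a way that would put it above $a-1$, and that the insertion path of $a$ ends in a row strictly below that of $a-1$, using that $a$ is larger than $a-1$ and distinctness of entries; (4) in the case $a$ before $a-1$: symmetric argument showing $a$ stays weakly above, hence (by distinctness and the fact that $a$ and $a-1$ can't be in the same cell) strictly above $a-1$; (5) check that subsequent insertions of letters larger than both never reverse the established vertical relationship, which follows because such a later letter, when it bumps a box, replaces a box and puts the bumped value one row higher, and one verifies this operation preserves ``$a$ below $a-1$'' and ``$a$ above $a-1$'' separately — this is the genuinely fiddly bookkeeping. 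The main obstacle is exactly this last stability check: one must show that no cascade of later bumps can interchange the rows of $a-1$ and $a$, and the clean way to see it is to observe that two entries of consecutive value, once separated into distinct rows, behave like a ``monotone pair'' under all further insertions — a fact that ultimately rests on the non-crossing property of bump paths. I would phrase that as a short lemma-within-the-proof or cite it from \cite{F}, rather than re-deriving the non-crossing property from scratch.
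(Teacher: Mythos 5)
Your plan hinges on the equivalence ``the value $a$ lies strictly below the value $a-1$ in $M$ if and only if $a$ appears after $a-1$ in $w$,'' and this is backwards. The correct fact --- the one the paper's own proof establishes, the one exhibited in the worked example that follows the lemma (the entries of $U_1$ occupy increasing rows of $M$ while appearing in \emph{decreasing} order in $w$), and the one the final theorem actually needs (its condition is $a_1>a_2$, not $a_1<a_2$) --- is that $a$ lies in a row strictly below $a-1$ in the insertion tableau $M$ if and only if $a$ appears \emph{before} (to the left of) $a-1$ in $w$; equivalently, $\mathrm{Des}(P(w))=\mathrm{Des}(w^{-1})$. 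Concretely, $w=12$ yields $M$ with $1$ and $2$ in the same first row even though $2$ is inserted after $1$, and $w=21$ yields $2$ strictly below $1$ even though $2$ is inserted first; both contradict the assertions in your steps (3) and (4), so those steps cannot be carried out. (The printed statement does say ``right,'' but that is a slip of the pen: taken literally it is refuted by $w=21$ and is inconsistent with the paper's proof and with the application of the lemma.)

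Once the direction is fixed, the argument is much shorter than your plan and your step (5) --- stability under later insertions, which you flag as the fiddly part --- essentially dissolves. Forward direction: if $a$ precedes $a-1$ in $w$, then whenever the bumping path of $a-1$ (at its own insertion, or at any later insertion that pushes $a-1$ downward) enters a row containing $a$, the entry it displaces must be $a$ itself, since $a$ is then the smallest entry of that row exceeding $a-1$; hence $a-1$ can never come to rest in the row of $a$ or below it, and $a$ stays strictly below $a-1$ forever. For the converse the paper does not return to forward insertion at all: if $a$ is strictly below $a-1$ in $M$, then in the reverse algorithm any reverse bump carrying $a$ up into the row of $a-1$ must displace $a-1$ (the largest entry less than $a$ in that row), so $a-1$ is ejected from $M$ before $a$ and is therefore written to the \emph{right} of $a$ in $w$. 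Your instinct to reduce everything to forward insertion and to invoke non-crossing of bump paths is workable in principle, but the consecutive-values observation above does all the work and avoids citing any such machinery.
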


\begin{proof}
Suppose that $a$ appears to the left of $a-1$ in $w$. When constructing $(M,N)$ from $w$ via the RSK correspondence, $a$ will already have been inserted into $M$ when $a-1$ is inserted. So either $a-1$ will remain above the row containing $a$, or, if it enters the row with $a$, it will bump $a$ to the next row of $M$. 

Now, suppose that in $M$, $a$ is in a row below $a-1$. We reverse the RSK algorithm to create $w$. At any move in which $a$ is reverse bumped to the row containing $a-1$, $a-1$ will be the greatest entry less than $a$ and thus be reverse bumped up.  As such, $a-1$ will be removed from $M$ to be placed in $w$ prior to $a$.  
\end{proof}

%\begin{lemma}\label{permM}
%If $a$ appears to the left of $a-1$ in a permutation, then after the implementation of the RSK algorithm, $a$ is in a row below $a-1$ in $M$.
%\end{lemma}
%
%\begin{proof}
%Since we move from left to right in inserting permutation elements, $a$ will already appear in $M$ when $a-1$ is inserted.  If $a-1$ is bumped to a row containing $a$, $a$ will be the next value higher in the row and $a$ will be bumped to the subsequent row.  Thus $a-1$ will always be in a row higher than $a$.  
%\end{proof}
%
%\begin{lemma}\label{Mperm}
%In a pair of standard Young tableaux $(M,N)$ of the same shape, if $a+1$ appears below $a$ in $M$, by the reverse of the RSK algorithm, $a$ appears to the right of $a+1$ in the resulting permutation $w$.
%\end{lemma}
%
%\begin{proof}
%At any move in which $a+1$ is reverse bumped to the row containing $a$, $a$ will be the greatest entry less than $a+1$ and thus be reverse bumped up.  As such, $a$ will be removed from $M$ to be placed in the permutation prior to $a+1$.  
%\end{proof}

\begin{lemma}\label{Nlemma}
Suppose the permutation $w = w_1w_2 \cdots w_{\ell}$ and the ordered pair of standard Young tableau $(M,N)$ correspond with each other through the RSK correspondence.  Then $w_a> w_{a+1}$ if and only if the box with entry $a$ appears in a row above the box with entry $a+1$ in $N$. 
\end{lemma}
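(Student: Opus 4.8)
The plan is to obtain Lemma \ref{Nlemma} as an immediate consequence of the classical Row Bumping Lemma for Schensted row-insertion (see \cite{F}, \S1.1, or \cite{Sta}). The key observation is that the recording tableau $N$ is built so that the box carrying the entry $a$ is exactly the unique box added to the common shape at the $a$-th insertion step --- that is, when $w_a$ is inserted into the insertion tableau $T^{(a-1)}$ produced by $w_1, \ldots, w_{a-1}$ --- and that once a box is created, its row and column are never altered by subsequent insertions. Hence the position of the entry $a$ in the final $N$ coincides with the position of the box created when $w_a$ is inserted, and the statement reduces to comparing the two new boxes produced by two consecutive row-insertions.

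First I would fix $a$, set $T = T^{(a-1)}$, let $B$ be the new box created by inserting $w_a$ into $T$, and let $B'$ be the new box created by inserting $w_{a+1}$ into $T \leftarrow w_a$; in $N$ these carry the entries $a$ and $a+1$ respectively. Since $w$ is a permutation we have $w_a \neq w_{a+1}$, so the Row Bumping Lemma yields a clean dichotomy: if $w_a < w_{a+1}$, then $B'$ lies strictly to the right of $B$ and either in the same row as $B$ or in a row above it; if $w_a > w_{a+1}$, then $B'$ lies weakly to the left of $B$ and strictly below it.

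The lemma now reads off at once. The box with entry $a$ lies in a row strictly above the box with entry $a+1$ in $N$ precisely when $B'$ lies strictly below $B$, and by the dichotomy this happens exactly when $w_a > w_{a+1}$; in the complementary case $w_a < w_{a+1}$ the box $B'$ is weakly above $B$, so $a$ is not in a row above $a+1$. Both implications follow simultaneously.

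The only substantive ingredient is the Row Bumping Lemma itself, which is classical. If a fully self-contained argument were wanted, the work would lie in recalling its proof: one tracks the two bumping routes of the consecutive insertions of $w_a$ and $w_{a+1}$ and shows, by downward induction on the rows, that the later route stays weakly on the prescribed side of the earlier one, with the stated strict-versus-weak inequalities holding at the terminal boxes. I expect this routine verification --- and nothing about maximal weights, crystals, or the ordered pairs of tableaux constructed earlier --- to be the only point requiring care.
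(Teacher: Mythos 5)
Your proof is correct, but it reaches the conclusion by a different route than the paper, and the difference is worth noting. The paper splits the biconditional: for the forward direction ($w_a > w_{a+1}$ implies $a$ lies above $a+1$ in $N$) it simply cites Lemma 7.23.1 of \cite{Sta}, and for the converse it runs an explicit reverse-RSK argument, tracking the two chains of reverse bumps triggered by removing the boxes labelled $a+1$ and $a$ from $N$ and comparing the entries $y_i \geq x_{i+1}$ row by row to conclude $w_a = y_1 > x_1 = w_{a+1}$. You instead invoke the full dichotomy of the Row Bumping Lemma --- $w_a < w_{a+1}$ forces the new box $B'$ weakly above $B$, while $w_a > w_{a+1}$ forces it strictly below --- and observe that since $w$ is a permutation these two cases are exhaustive and mutually exclusive, so both implications of the biconditional drop out of the forward statements alone. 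This makes the converse direction essentially free and avoids the reverse-bumping bookkeeping entirely; the trade-off is that you lean on the precise two-sided form of the Row Bumping Lemma (including the column information, which you do not actually need) rather than only the one-sided descent statement the paper quotes. Both arguments are sound; yours is shorter and arguably cleaner, while the paper's is more self-contained in its treatment of the converse.
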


\begin{proof}
It follows from Lemma 7.23.1 in \cite{Sta} that if $w_a > w_{a+1}$, then the box with entry $a$ appears in a row above the box with entry $a+1$ in $N$. 

Now suppose that the entry $a$ is above the entry $a+1$ in $N$ and use the reverse of the RSK algorithm to form $w$. Specifically, consider the step where we remove the box with entry $a+1$ in row $t$ of $N$. Then we  reverse bump the corresponding box, say $x_t$, in $M$.  A series of reverse bumps occurs with entries $x_t > x_{t-1} > \cdots > x_2$ each being bumped up one row until $x_2$ bumps some $x_1 < x_2$ out of the diagram and we set $w_{a+1} = x_1$.  Now, suppose the box with entry $a$ is at the end of row $d < t$ in $N$ with corresponding box $y_d$ in $M$.  Since $y_d$ is at the end of a row containing $x_{d+1}$,  $y_d \geq x_{d+1} > x_d$.  When $y_d$ is bumped to row $d-1$ (which contains $x_d$), it bumps some $y_{d-1}$ where $x_d \leq y_{d-1} < y_d$.  Continuing in this manner at each row $i$ we bump some $y_i \geq x_{i+1}$ to the next higher row.  Finally, we bump $y_1 \geq x_2$ from the diagram to set $w_a = y_1$.  Since $y_1 \geq x_2 > x_1 = w_{a+1}$, we have $w_a > w_{a+1}$.  
\end{proof}

\begin{theorem} The multiplicity of  $k\Lambda_0 - \lambda_{A,B}^\ell$ of $V(k\Lambda_0)$ is the number of all $(k+1)k(k-1)\cdots 21$-avoiding permutations $w$ of $[\ell]$ such that
%\begin{itemize}
%\item Each of the following sets forms a decreasing subsequence of entries in $w$:  $\{1, 2, 3, \ldots, a_1\}$, $\{a_1+1, a_1 + 2, \ldots, a_1 + a_2\}$, \ldots, $\left \{\ds \sum_{i=1}^{r-1}a_i + 1,  \ds \sum_{i=1}^{r-1}a_i+2, \ldots, \ds \sum_{i=1}^{r}a_i\right \}$.
%\item  $w_1 > w_2 > \ldots  > w_{b_1}$;  $w_{b_1+1} > w_{b_1+2} >  \ldots > w_{b_1+b_2}$; $w_{b_1+b_2+1} >  w_{b_1+b_2+2} > \ldots > w_{b_1+b_2+b_3}$ and so on until $w_{\sum_{i=1}^{q-1}b_i+1} > w_{\sum_{i=1}^{q-1}b_i+2} >  \ldots >  w_{\sum_{i=1}^{q}b_i}$.
%\end{itemize}
\begin{itemize}
\item  For each $j$, if $j_1, j_2 \in U_j$ with $w_{a_1} = j_1 < w_{a_2} = j_2$, then $a_1 >   a_2$. 
\item For each $j$, if $j_1, j_2 \in V_j$ with $j_1 < j_2$, then $w_{j_1} > w_{j_2}$.
\end{itemize}

\end{theorem}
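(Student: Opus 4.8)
The plan is to derive this theorem as a direct corollary of Theorem~\ref{pairsSYT} by transporting the description of $\mathcal{S}_{A,B}^\ell$ through the RSK correspondence, using Lemmas~\ref{Mlemma} and~\ref{Nlemma} as the translation dictionary. By Theorem~\ref{pairsSYT}, the multiplicity of $k\Lambda_0 - \lambda_{A,B}^\ell$ equals $|\mathcal{S}_{A,B}^\ell|$, the number of ordered pairs $(M,N)$ of standard Young tableaux of the same shape $\mu \vdash \ell$ with at most $k$ rows and first row of length at most $\min\{r,q\}$, satisfying the two row-ordering conditions on the sets $U_j$ (in $M$) and $V_j$ (in $N$). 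The RSK correspondence is a bijection between such ordered pairs $(M,N)$ with shape having at most $k$ rows and permutations $w$ of $[\ell]$ with no decreasing subsequence of length $k+1$, i.e.\ $(k+1)k\cdots 21$-avoiding permutations (this is the cited Corollary 7.23.12 in \cite{Sta}). The restriction that the first row of $\mu$ has at most $\min\{r,q\}$ boxes translates, again via RSK, to $w$ having no increasing subsequence of length exceeding $\min\{r,q\}$; since the admissibility hypotheses force $\mu_1 \le \min\{r,q\}$ automatically for every pair in $\mathcal{S}_{A,B}^\ell$ (and indeed $\lceil \ell/a_1\rceil + \lceil \ell/b_1\rceil \le n$ with $r \ge \lceil \ell/a_1\rceil$, $q \ge \lceil \ell/b_1\rceil$), I would check that this condition is not an extra constraint on $w$ beyond what the $U_j$- and $V_j$-conditions already impose, or simply carry it along as a harmless redundant hypothesis — I will need to verify carefully that it is implied, or else state it explicitly in the permutation characterization.

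The core of the argument is then purely local. First I would show that the $M$-condition in $\mathcal{S}_{A,B}^\ell$ is equivalent to the first bulleted condition on $w$. The $M$-condition says: for each $j$, the elements of $U_j = \{\sum_{i<j}a_i+1,\dots,\sum_{i\le j}a_i\}$ occur in strictly increasing rows in $M$, i.e.\ each consecutive pair $a, a+1$ lying in the same block $U_j$ has $a+1$ strictly below $a$ in $M$. By Lemma~\ref{Mlemma}, $a+1$ is in a row below $a$ in $M$ if and only if $a+1$ appears to the right of $a$ in $w$. Chaining this over consecutive pairs inside a block $U_j$, the whole block $U_j$ appears in increasing order of position, i.e.\ the smaller elements of $U_j$ appear earlier in $w$; rephrasing in the notation of the statement, if $w_{a_1} = j_1 < j_2 = w_{a_2}$ with $j_1, j_2 \in U_j$ then $a_1 > a_2$. (Here I should be a little careful: Lemma~\ref{Mlemma} is stated for the specific consecutive pair $a-1, a$, so the chaining argument needs the blocks $U_j$ to be intervals of consecutive integers, which they are by construction.) Conversely the same chain of equivalences recovers the $M$-condition from the first bullet.

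Next I would show the $N$-condition in $\mathcal{S}_{A,B}^\ell$ is equivalent to the second bullet. The $N$-condition says: for each $j$, the elements of $V_j$ occur in strictly increasing rows in $N$, i.e.\ for each consecutive pair $a, a+1 \in V_j$, the box $a$ is in a row strictly above the box $a+1$ in $N$. By Lemma~\ref{Nlemma}, this holds if and only if $w_a > w_{a+1}$. Chaining over the (consecutive-integer) block $V_j$ gives: for all $j_1 < j_2$ in $V_j$, $w_{j_1} > w_{j_2}$, which is exactly the second bullet; and conversely. Combining the two equivalences, RSK restricts to a bijection between $\mathcal{S}_{A,B}^\ell$ and the set of $(k+1)k\cdots 21$-avoiding permutations $w$ of $[\ell]$ satisfying both bullets, whence the multiplicity equals the cardinality of the latter set, proving the theorem.

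The main obstacle I anticipate is not any single hard step but making the chaining arguments airtight: Lemmas~\ref{Mlemma} and~\ref{Nlemma} are phrased only for the adjacent pair $(a-1,a)$ or $(a,a+1)$, and one must confirm that ``strictly increasing rows for the whole block'' is genuinely equivalent to ``the adjacent-pair condition for every consecutive pair within the block,'' which uses transitivity of ``is in a row above'' together with the fact that the $U_j$ and $V_j$ are intervals. A secondary subtlety is the bookkeeping around the constraint $\mu_1 \le \min\{r,q\}$: I should either prove it is automatic from admissibility (via the inequalities $\lceil \ell/a_1\rceil + \lceil \ell/b_1\rceil \le n$ and $a_1 + b_1 \le k$, $r+q\le n$ already recorded in Section~2) so that the permutation count is clean, or include it as a stated (redundant) hypothesis in the characterization; I would favor the former for a cleaner statement but verify it explicitly.
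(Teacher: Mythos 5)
Your proposal follows the paper's proof exactly: reduce to $|\mathcal{S}_{A,B}^\ell|$ via Theorem~\ref{pairsSYT}, then translate the two conditions on $(M,N)$ through the RSK correspondence using Lemmas~\ref{Mlemma} and~\ref{Nlemma}, the chaining over each block being legitimate precisely because the $U_j$ and $V_j$ are intervals of consecutive integers (which is the one observation the paper's proof makes explicit). Your worry about the constraint $\mu_1 \le \min\{r,q\}$ resolves itself: since the row conditions force at most one element of each $U_j$ (resp.\ $V_j$) into the first row of $M$ (resp.\ $N$), that bound is already implied by condition (2) in the definition of $\mathcal{S}_{A,B}^\ell$, so no extra hypothesis on $w$ is needed.
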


\begin{proof}
Since for any $i,j$, entries in $U_i$ and $V_j$ are consecutive integers, it follows from Lemmas \ref{Mlemma} and \ref{Nlemma} that for given $\ell$, $A$, $B$, the set of permutations in the statement of the theorem is bijective with the set of ordered pairs of Young tableaux $(M, N) \in \mathcal{S}_{A,B}^{\ell}$. Hence by Theorem \ref{pairsSYT}, the result follows. 

%We show that $\mathcal{S}_{A,B}^\ell$ has the same cardinality as the set of permutations in the statement of this theorem.  

%First, let $w$ be a $(k+1)k(k-1)\cdots 21$-avoiding permutation of $[\ell]$ as in the statement of the theorem.  By using the RSK correspondence and applying Lemmas \ref{permM} and \ref{permN}, we obtain an ordered pair $(M,N) \in \mathcal{S}_{A,B}^\ell$.  
%
%Next, we begin with an ordered pair $(M,N) \in \mathcal{S}_{A,B}^\ell$.  By reversing the RSK correspondence, we obtain $w$, a $(k+1)k(k-1)\cdots21$-avoiding permutation of $[l]$.  Applying Lemmas \ref{Mperm} and \ref{Nperm}, we know that $w$ is as in the statement of this theorem.  Hence, by Theorem \ref{pairsSYT}, this theorem holds. 
\end{proof}

\end{document}